\newcommand{\fix}{\mathrm{fix\, }}
\newcommand{\im}{\mathrm{im\, }}
\newcommand{\rank}{\mathrm{rank\, }}
\begin{document}
	
\setcounter{page}{1}
	
\markboth{E. Korkmaz and H. Ayık }{Combinatorial results for zero-divisors regarding right zero elements of order-preserving transformations}
	
%
\catchline{}{}{}{}{}
%
	
\title{Combinatorial results for zero-divisors regarding right zero elements of order-preserving transformations}

\author{Emrah Korkmaz}
	
\address{Department of Mathematics, Çukurova University\\
		Adana 01330, Türkiye\\
		\email{ekorkmaz@cu.edu.tr}}
	
\author{Hayrullah Ayık}
	
\address{Department of Mathematics, Çukurova University\\
		Adana 01330, Türkiye\\
		\email{hayik@cu.edu.tr}}
	
\maketitle
	
\begin{history}
	\comby{(Handling Editor)}
	\received{(Day Month Year)}
	\revised{(Day Month Year)}
	\accepted{(Day Month Year)}
	\published{(Day Month Year)}
\end{history}
	
\begin{abstract}
	For any positive integer $n$, let $\mathcal{O}_{n}$  be the semigroup of all order-preserving full transformations on $X_{n}=\{1<\cdots <n\}$. For any $1\leq k\leq n$, let  $\pi_{k}\in \mathcal{O}_{n}$ be the constant map defined by  $x\pi_{k}=k$ for all $x\in X_{n}$. In this paper, we introduce and study the sets of left, right, and two-sided zero-divisors of $\pi_{k}$:
	\begin{eqnarray*}
		\mathsf{L}_{k} &=& \{ \alpha\in \mathcal{O}_{n}:\alpha\beta=\pi_{k} \mbox{ for some }\beta\in \mathcal{O}_{n} \setminus\{\pi_{k}\} \}, \\
		\mathsf{R}_{k} &=& \{ \alpha\in \mathcal{O}_{n}:\gamma\alpha=\pi_{k} \mbox{ for some }\ \gamma\in \mathcal{O}_{n}\setminus\{\pi_{k}\} \}, \ \mbox{and} \ \mathsf{Z}_{k}=\mathsf{L}_{k}\cap \mathsf{R}_{k}. 
	\end{eqnarray*}
We determine the structures and cardinalities of $\mathsf{L}_{k}$, $\mathsf{R}_{k}$ and $\mathsf{Z}_{k}$ for each $1\leq k\leq n$. Furthermore, we compute the ranks of $\mathsf{R}_{1}$,\, $\mathsf{R}_{n}$,\, $\mathsf{Z}_{1}$,\, $\mathsf{Z}_{n}$ and $\mathsf{L}_{k}$ for each $1\leq k\leq n$, because these are significant subsemigroups of $\mathcal{O}_{n}$.
\end{abstract}
	
\keywords{Order-preserving; right zero divisors; transformations; generating set; rank.}
	
\ccode{AMS Subject Classification: 20M20, 20M10}
	
\section{Introduction}\label{sec1}

For $n\in \mathbb{N}$, let $\mathcal{T}_{n}$ denote the full transformation semigroup on the chain $X_{n}=\{1, \ldots ,n\}$ under its natural order. A transformation $\alpha\in \mathcal{T}_{n}$ is called \emph{order-preserving} if $x<y$ implies $x\alpha \leq y\alpha$ for all $x,y\in X_{n}$, and is called \emph{order-decreasing} (\emph{order-increasing}) if $x\alpha \leq x$ ($x\leq x\alpha$) for all $x\in X_{n}$. We denote the subsemigroup of $\mathcal{T}_{n}$ consisting of all order-preserving full transformations by $\mathcal{O}_{n}$. The \emph{fix} and \emph{image} sets of a transformation $\alpha\in \mathcal{T}_{n}$ are defined by $\fix(\alpha )=\{ x\in X_{n}: x\alpha=x\}$ and  $\im(\alpha)=\{x\alpha :x\in X_{n}\}$, respectively.
	
For a non-empty subset $A$ of a semigroup $S$, the smallest subsemigroup of $S$ containing $A$ is denoted by $\langle A\,\rangle$. For a subset $A$ of $S$, if $S=\langle A \rangle$, then $A$ is called a \emph{generating set} of $S$. The \emph{rank} of a semigroup $S$ is defined by $\rank(S)=\min \{\, \lvert A\rvert : S=\langle A\, \rangle \}$. A generating set of $S$ with size $\rank(S)$ is called a \emph{minimal generating set} of $S$. An element $s\in S$ is called \emph{undecomposable} if there are no $a,b\in S\setminus \{s\}$ such that $s=ab$. It is clear that every generating set of $S$ contains all undecomposable elements of $S$. For additional information on semigroup theory, the reader is advised to consult \cite{Howie:1995}.

Semigroups of order-preserving transformations have been the focus of sustained interest for over six decades. The earliest known studies of $\mathcal{O}_{n}$ date back to 1962, when A\u\i zen\v stat  \cite{Aizenstat:1962,Aizenstat1:1962} provided a presentation of $\mathcal{O}_n$ and described its congruences.  In 1971, Howie \cite{Howie:1971} computed both the cardinality  and the number of idempotents of $\mathcal{O}_{n}$. This was followed by a joint work with Gomes \cite{Gomes&Howie:1992}, where the rank and the idempotent rank of $\mathcal{O}_{n}$  were determined. Since then, a wide range of research has investigated the algebraic and combinatorial aspects of $\mathcal{O}_{n}$ and its subsemigroups. For more recent developments related to this paper, the reader is referred \cite{Korkmaz:2025,Koppitz&Worawiest:2022,Fernandes:1997,Fernandes&Volkov:1997} for further research within the scope of this study. 
	
In this paper, we consider the subsemigroups of $\mathcal{O}_{n}$ arising from its right zero elements. For each $1\leq k\leq n$, let 
$\pi_{k}=\left(\begin{smallmatrix}
	1 & 2 & \cdots & n \\
	k & k & \cdots & k
\end{smallmatrix}\right)$, 
one of the right zero elements of $\mathcal{O}_{n}$. For each $1\leq k\leq n$, let $\mathcal{O}_{n}^{k} =\mathcal{O}_{n} \setminus\{\pi_{k}\}$, and then, we define the following sets:
\begin{eqnarray*}
	\mathsf{L}_{k} &=& \{ \alpha \in \mathcal{O}_{n} : \alpha \beta= \pi_{k} \mbox{ for some } \beta\in \mathcal{O}_{n}^{k} \}, \\
	\mathsf{R}_{k} &=& \{ \alpha \in \mathcal{O}_{n} : \gamma \alpha =\pi_{k} \mbox{ for some } \gamma \in \mathcal{O}_{n}^{k} \}\, \mbox{ and}\\
	\mathsf{Z}_{k} &=&\mathsf{L}_{k}\cap \mathsf{R}_{k}= \{ \alpha \in \mathcal{O}_{n} : \alpha \beta =\pi_{k}= \gamma \alpha \mbox{ for some }\beta, \gamma \in \mathcal{O}_{n}^{k} \}.
	\end{eqnarray*}
For each $1\leq k\leq n$,  the sets $\mathsf{L}_{k}$, $\mathsf{R}_{k}$ and $\mathsf{Z}_{k}$ are called \emph{the set of left zero-divisors}, \emph{the set of right zero-divisors} and \emph{the set of two-sided zero-divisors} of $\mathcal{O}_{n}$ with respect to $\pi_{k}$, respectively. 
	
Let $\mathcal{IO}_{n}$ be the subsemigroup of $\mathcal{O}_{n}$ consisting of all transformations in $\mathcal{O}_{n}$ whose images are convex subsets (intervals) of $X_{n}$. Fernandes and Paulista showed that $\mathcal{IO}_{n}$ coincides with the subsemigroup of all \emph{weak endomorphisms} of a directed path with $n$ vertices in \cite{Fernandes&Paulista:2023}. Moeover, the authors determined the cardinality and rank of $\mathcal{IO}_{n}$.
Thereafter, in \cite{Fernandes:2024}, Fernandes gave a presentation for the subsemigroup $\mathcal{IO}_{n}$. Such results underline the pivotal role of $\mathcal{IO}_n$ for examining $L_k$ ($1 \leq k \leq n$).
	
The constant maps $\pi_k$ are idempotent right zero elements of  $\mathcal{O}_n$. Despite their natural algebraic role, the subsemigroups they generate remain unexplored. Notably, their interaction with the left zero-divisors $\mathsf{L}_k$ and $\mathcal{IO}_n$ suggests structural significance from both algebraic, combinatorial and graph-theoretic perspectives.
	
A motivating precedent comes from the full transformation semigroup $\mathcal{T}_n$, where right zero elements like $\pi_k$ have been studied via graph-theoretic constructions in \cite{Korkmaz:2025}. The collection of studies on zero-divisor graphs in 
 \cite{Anderson&Livingston:1999,Korkmaz:2002,DeMeyer&DeMeyer:2005,Redmond:2002,Toker:2021,Toker:2021a}  suggest that analogous techniques may yield fruitful insights within $\mathcal{O}_n$ as well.

Given that $\pi_k$ behaves similarly in $\mathcal{O}_n$ and $\mathcal{T}_n$, and in light of the established graph-theoretic frameworks for $\mathcal{T}_n$ and $\mathcal{IO}_n$, the results presented in this paper lay a promising foundation for further research.

\section{Zero Divisors of $\mathcal{O}_{n}$}\label{sec2}

For any $n,r\in \mathbb{N}$, the number of ordered non-negative integers solutions of the equation $x_{1}+x_{2}+\cdots+x_{r}=n$ is known to be $\binom{n+r-1}{r-1}$. As noted in \cite{Howie:1971}, the cardinality of $\mathcal{O}_{n}$ is equal to the number of non-negative integer solutions of the equation $x_{1}+x_{2}+\cdots+x_{n}=n$, that is $\lvert \mathcal{O}_{n} \rvert=\binom{2n-1}{n-1}$. By using a similar technique, we have the following result:

\smallskip

\begin{lemma}\label{l1}
	For $n\geq 2$,
\begin{enumerate}[$(i)$]
	\item $\mathsf{L}_{1}=\{ \alpha\in \mathcal{O}_{n} : n\notin \im(\alpha)\}$ and  $\lvert \mathsf{L}_{1} \rvert= \binom{2n-2}{n-2}$;
	\item $\mathsf{L}_{n}=\{ \alpha\in \mathcal{O}_{n} : 1\notin \im(\alpha)\}$ and  $\lvert \mathsf{L}_{n} \rvert= \binom{2n-2}{n-2}$; and
	\item  $\mathsf{L}_{k} =\mathsf{L}_{1} \cup \mathsf{L}_{n} =\{ \alpha\in \mathcal{O}_{n} : 1\notin \im(\alpha) \mbox{ or } n\notin \im(\alpha)\}$ and $\lvert \mathsf{L}_{k} \rvert= \binom{2n-2}{n-2} +\binom{2n-3}{n-2}$ for $n\geq 3$ and each $2\leq k\leq n-1$.
\end{enumerate}
\end{lemma}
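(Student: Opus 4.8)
The plan is to reduce membership in $\mathsf{L}_k$ to a condition on $\im(\alpha)$ alone, and then count. First I would observe that, since transformations act on the right, $\alpha\beta=\pi_k$ holds precisely when $(x\alpha)\beta=k$ for every $x\in X_n$; that is, $\beta$ collapses all of $\im(\alpha)$ to $k$. Writing $a=\min\im(\alpha)$ and $b=\max\im(\alpha)$, the order-preservation of $\beta$ forces $y\beta=k$ for every $y$ with $a\leq y\leq b$, because $k=a\beta\leq y\beta\leq b\beta=k$. Hence $\beta$ is pinned down to the constant $k$ on the interval $\{a,a+1,\ldots,b\}$, and the only freedom left lies strictly below $a$ or strictly above $b$.

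The heart of the argument is then deciding when that remaining freedom permits $\beta\neq\pi_k$. Below $a$ an order-preserving $\beta$ may take values $\leq k$ only, so a genuine deviation there requires both $a\geq 2$ (room for an argument) and $k\geq 2$ (room for a value below $k$); symmetrically, above $b$ a deviation requires $b\leq n-1$ and $k\leq n-1$. For $k=1$ only the upward option survives, giving $\alpha\in\mathsf{L}_1\iff b<n\iff n\notin\im(\alpha)$; for $k=n$ only the downward option survives, giving $\mathsf{L}_n=\{\alpha:1\notin\im(\alpha)\}$; and for $2\leq k\leq n-1$ both options are available, so $\alpha\in\mathsf{L}_k$ iff $a>1$ or $b<n$, i.e.\ $1\notin\im(\alpha)$ or $n\notin\im(\alpha)$, which is exactly $\mathsf{L}_1\cup\mathsf{L}_n$. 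In each case I would prove the ``if'' direction by exhibiting an explicit two-step witness $\beta$ (for instance $x\beta=k-1$ for $x<a$ and $x\beta=k$ otherwise, or $x\beta=k$ for $x\leq b$ and $x\beta=k+1$ otherwise), and the ``only if'' direction by noting that when $a=1$ and $b=n$ the map $\beta$ is forced onto all of $X_n$ and hence equals $\pi_k$.

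For the cardinalities I would count order-preserving maps with the stars-and-bars device already used for $\lvert\mathcal{O}_n\rvert$. An element of $\mathsf{L}_1$ is exactly an order-preserving map $X_n\to\{1,\ldots,n-1\}$, i.e.\ a weakly increasing $n$-tuple drawn from $n-1$ values, of which there are $\binom{2n-2}{n-2}$; the same count gives $\lvert\mathsf{L}_n\rvert$. For part $(iii)$ I would apply inclusion--exclusion to $\mathsf{L}_1\cup\mathsf{L}_n$: the intersection $\mathsf{L}_1\cap\mathsf{L}_n$ consists of the order-preserving maps $X_n\to\{2,\ldots,n-1\}$ into a chain of length $n-2$, numbering $\binom{2n-3}{n-3}$, so that $\lvert\mathsf{L}_k\rvert=2\binom{2n-2}{n-2}-\binom{2n-3}{n-3}$.

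I expect the only real friction to be bookkeeping: making the boundary cases $k=1$, $k=n$, and $2\leq k\leq n-1$ cohere under a single ``room below/above'' analysis without sign errors, and then reconciling the inclusion--exclusion total with the stated closed form. The latter is a one-line application of Pascal's rule, $\binom{2n-2}{n-2}=\binom{2n-3}{n-2}+\binom{2n-3}{n-3}$, which rewrites $2\binom{2n-2}{n-2}-\binom{2n-3}{n-3}$ as $\binom{2n-2}{n-2}+\binom{2n-3}{n-2}$; I would close the proof by invoking it.
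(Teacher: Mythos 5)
Your proposal is correct and follows essentially the same route as the paper: both arguments rest on the observation that order-preservation forces $\beta$ to be the constant $k$ on the interval spanned by $\im(\alpha)$, so that $\beta\neq\pi_k$ is possible precisely when the image misses the relevant endpoint(s) of $X_n$, and both then count via stars-and-bars plus inclusion--exclusion with the same Pascal-rule simplification. Your unified ``room below/above'' analysis merely packages the paper's three separate case arguments ($k=1$, $k=n$, $2\leq k\leq n-1$) into one statement, a stylistic rather than substantive difference.
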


\begin{proof} $(i)$ Let $\alpha \in \mathsf{L}_{1}$ and suppose that $\alpha\beta=\pi_{1}$ for some $\beta\in \mathcal{O}_{n}^{1}$. Assume that $n\in \im(\alpha)$. Since $1\leq x\beta \leq n\beta =1$, we have $x\beta=1$ for all $x\in  X_{n}$, and so $\beta=\pi_{1}$ which is a contradiction. Therefore, $n\notin \im(\alpha)$.
	
Conversely, let $\alpha \in \mathcal{O}_{n}$ with $n\notin \im(\alpha)$. If we consider the mapping 
$\beta= \left( \begin{smallmatrix}
	1 &\cdots & n-1 & n \\
	1 &\cdots &  1  & n 
\end{smallmatrix}\right),$
then it is clear that $\beta\in \mathcal{O}_{n}^{1}$ and $\alpha \beta= \pi_{1}$, and hence, $\alpha\in \mathsf{L}_{1}$.
	
Thus, the cardinality of $\mathsf{L}_{1}$ is equal to the number of ordered integer solutions of the equation $x_{1}+x_{2}+\cdots+ x_{n-1} =n$ with $x_{1},x_{2},\ldots, x_{n-1}\geq0$, i.e. $\lvert \mathsf{L}_{1} \rvert= \binom{2n-2}{n-2}$. 
	
$(ii)$ The proof is similar to the proof $(i)$.
	
$(iii)$ For a fixed $2\leq k\leq n-1$, let $\alpha\in \mathsf{L}_{k}$ and suppose that $\alpha \beta= \pi_{k}$ for some $\beta\in \mathcal{O}_{n}^{k}$. Assume that  $\{1,n\} \subseteq \im(\alpha)$. Then we must have $1\beta= n\beta =k$, and so $\beta= \pi_{k}$ which is a contradiction. Therefore, $1\notin \im(\alpha)$ or $n\notin \im(\alpha)$, and so $\alpha\in \mathsf{L}_{n}$ or $\alpha \in \mathsf{L}_{1}$, respectively.
	
Conversely, suppose $\alpha\in \mathsf{L}_{1}\cup \mathsf{L}_{n}$. If we consider 
$\beta_{k}= \left( \begin{smallmatrix}
	1 &\cdots & n-1 & n \\
	k &\cdots &  k  & n 
\end{smallmatrix}\right)$ and 
$\gamma_{k}= \left( \begin{smallmatrix}
	1 & 2 &\cdots & n \\
	1 & k &\cdots & k  
\end{smallmatrix}\right)$,
then it is clear that $\beta_{k}, \gamma_{k} \in \mathcal{O}_{n}^{k}$, and moreover, $\alpha \beta_{k} =\pi_{1}$ if  $\alpha\in \mathsf{L}_{1}$ or $\alpha \gamma_{k} =\pi_{1}$ if $\alpha\in \mathsf{L}_{n}$, and hence, $\alpha\in \mathsf{L}_{k}$.
	
Since the cardinality of $\mathsf{L}_{1} \cap \mathsf{L}_{n}= \{\alpha\in \mathcal{O}_{n} : 1\notin \im(\alpha) \mbox{ and } n\notin \im(\alpha) \}$ is equal to the number of ordered integer solutions of the equation  $x_{2}+x_{3} +\cdots + x_{n-1}=n$ with $x_{2},\ldots, x_{n-1}\geq0$, i.e. $\lvert \mathsf{L}_{1} \cap \mathsf{L}_{n} \rvert= \binom{2n-3}{n-3}$, it follows that
	$$\lvert \mathsf{L}_{k} \rvert= \lvert \mathsf{L}_{1} \rvert +\lvert \mathsf{L}_{n} \rvert- \lvert \mathsf{L}_{1} \cap \mathsf{L}_{n} \rvert= 2\binom{2n-2}{n-2} -\binom{2n-3}{n-3} =\binom{2n-2}{n-2} +\binom{2n-3}{n-2},$$
for each $2\leq k\leq n-1$. 
\end{proof} 

We now determine the structures and cardinalities of the set of right zero-divisors of $\mathcal{O}_{n}$ with respect to $\pi_{k}$ for each $1\leq k\leq n$.

\smallskip

\begin{lemma}\label{l2}
	Let $n\geq 2$ and $1\leq k\leq n$. Then we have
	$$\mathsf{R}_{k}=\{ \alpha\in \mathcal{O}_{n} : k\in \im(\alpha) \,\mbox{ and }\, k\alpha^{-1} \neq \{k\}\},$$ and moreover, $\lvert \mathsf{R}_{1}\rvert= \lvert \mathsf{R}_{n} \rvert= \binom{2n-3}{n-1}$ and $\lvert \mathsf{R}_{k} \rvert= \binom{2n-2}{n-1} -\binom{2k-3}{k-2} \binom{2n-2k-1}{n-k-1}$ for $2\leq k\leq n-1$.
\end{lemma}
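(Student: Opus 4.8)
The plan is to first establish the set-theoretic description of $\mathsf{R}_k$ and then count it by complementation inside $\mathcal{O}_n$. For the characterization I would unpack the condition $\gamma\alpha=\pi_k$: since $x(\gamma\alpha)=(x\gamma)\alpha$, this equation is equivalent to $\im(\gamma)\subseteq k\alpha^{-1}$, i.e. every element hit by $\gamma$ is sent to $k$ by $\alpha$. For the forward inclusion, suppose $\alpha\in\mathsf{R}_k$ with witness $\gamma\in\mathcal{O}_n^k$. Because $\im(\gamma)\neq\emptyset$ and $\im(\gamma)\subseteq k\alpha^{-1}$, we get $k\alpha^{-1}\neq\emptyset$, i.e. $k\in\im(\alpha)$; and if $k\alpha^{-1}=\{k\}$ then $\im(\gamma)\subseteq\{k\}$ forces $\gamma=\pi_k$, a contradiction, so $k\alpha^{-1}\neq\{k\}$. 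Conversely, if $k\in\im(\alpha)$ and $k\alpha^{-1}\neq\{k\}$, I would pick $j\in k\alpha^{-1}$ with $j\neq k$ and take $\gamma=\pi_j$; then $\gamma\in\mathcal{O}_n^k$ and $\gamma\alpha=\pi_k$, so $\alpha\in\mathsf{R}_k$. This step is short and should pose no difficulty.

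For the cardinalities I would pass to the standard encoding of $\mathcal{O}_n$ by non-decreasing sequences $1\alpha\leq\cdots\leq n\alpha$, so that counting reduces to the stars-and-bars argument already used for $\lvert\mathcal{O}_n\rvert=\binom{2n-1}{n-1}$. The key observation is that the complement $\mathcal{O}_n\setminus\mathsf{R}_k$ is the \emph{disjoint} union of $A=\{\alpha:k\notin\im(\alpha)\}$ and $B=\{\alpha:k\alpha^{-1}=\{k\}\}$, disjoint because every $\alpha\in B$ satisfies $k\in\im(\alpha)$. Counting $A$ amounts to non-decreasing length-$n$ sequences whose values avoid $k$, i.e. lie in an $(n-1)$-element chain, giving $\lvert A\rvert=\binom{2n-2}{n-2}$. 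For $B$, order-preservation together with $k\alpha=k$ and $k\alpha^{-1}=\{k\}$ forces $x\alpha<k$ for $x<k$ and $x\alpha>k$ for $x>k$; hence $\alpha$ decomposes independently into an order-preserving self-map of $\{1,\dots,k-1\}$ and one of $\{k+1,\dots,n\}$, so for $2\leq k\leq n-1$ one obtains $\lvert B\rvert=\binom{2k-3}{k-2}\binom{2n-2k-1}{n-k-1}$.

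Combining gives $\lvert\mathsf{R}_k\rvert=\binom{2n-1}{n-1}-\binom{2n-2}{n-2}-\lvert B\rvert$, and a single application of Pascal's rule $\binom{2n-1}{n-1}=\binom{2n-2}{n-1}+\binom{2n-2}{n-2}$ collapses the first two terms to $\binom{2n-2}{n-1}$, yielding the stated formula for $2\leq k\leq n-1$. The boundary cases $k=1$ and $k=n$ must be treated separately because one of the two intervals in the decomposition of $B$ is empty: for $k=1$ the block below $k$ vanishes and $\lvert B\rvert=\binom{2n-3}{n-2}$, and symmetrically for $k=n$. A second application of Pascal's rule, $\binom{2n-2}{n-1}-\binom{2n-3}{n-2}=\binom{2n-3}{n-1}$, then gives $\lvert\mathsf{R}_1\rvert=\lvert\mathsf{R}_n\rvert=\binom{2n-3}{n-1}$.

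The only real obstacle is the count of $B$: one must verify carefully that the constraints $k\alpha=k$, $k\alpha^{-1}=\{k\}$ and order-preservation genuinely force the two blocks to land in the disjoint intervals $\{1,\dots,k-1\}$ and $\{k+1,\dots,n\}$, so that the two choices are independent and each block is an \emph{unconstrained} order-preserving self-map of a smaller chain, and to track the degenerate boundary behaviour correctly. Once this is pinned down, the remaining work is routine Pascal-rule bookkeeping.
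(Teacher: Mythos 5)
Your proposal is correct and follows essentially the same route as the paper: the same characterization of $\mathsf{R}_k$ (constant maps $\pi_j$ with $j\in k\alpha^{-1}\setminus\{k\}$ as witnesses), and for $2\leq k\leq n-1$ the same complementation $\mathcal{O}_n\setminus\mathsf{R}_k=\{\alpha:k\notin\im(\alpha)\}\sqcup\{\alpha:k\alpha^{-1}=\{k\}\}$ with the second set counted via the product decomposition into $\mathcal{O}_{k-1}\times\mathcal{O}_{n-k}$. The only (inessential) difference is at $k=1,n$: the paper counts $\mathsf{R}_1$ directly as the maps with $1\alpha=2\alpha=1$ via stars-and-bars, whereas you reuse the complementation with a degenerate block plus Pascal's rule, which gives the same value $\binom{2n-3}{n-1}$.
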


\begin{proof} Let $1\leq k\leq n$ and let $\alpha\in \{\alpha \in \mathcal{O}_{n} : k\in \im(\alpha) \,\mbox{ and }\, k\alpha^{-1} \neq \{k\}\}$. Then there exists $i\in X_{n} \setminus \{k\}$ such that $i\alpha=k$, and so $\pi_{i} \alpha =\pi_{k}$. Since $\pi_{i} \in \mathcal{O}_{n}^{k}$, we have $\alpha\in \mathsf{R}_{k}$.
		
Conversely, let $\alpha\in \mathsf{R}_{k}$ and suppose that $\beta \alpha= \pi_{k}$ for some $\beta\in \mathcal{O}_{n}^{k}$. Since $\{k\}= \im(\pi_{k}) \subseteq \im(\alpha)$, we have $k\in \im(\alpha)$. In addition, since $k\alpha^{-1} =\{k\}$ implies $\beta= \pi_{k}$, which is a contradiction, we conclude that $k\alpha^{-1} \neq \{k\}$ for all $1\leq k\leq n$. 
	
Now we determine the cardinalities. For each $\alpha\in \mathsf{R}_{1}$, since $1\in \im(\alpha)$ and $1\alpha^{-1} \neq \{1\}$, it follows that $1\alpha= 2\alpha=1$, and so the cardinality of $\mathsf{R}_{1}$ is equal to the number of ordered integer solutions of the equation  $x_{1}+x_{2}+\cdots+x_{n}=n$ with $x_{1}\geq 2$ and $x_{2},\ldots, x_{n}\geq0$, i.e. $\lvert \mathsf{R}_{1}\rvert =\binom{2n-3}{n-1}$. Similarly, we have $\lvert \mathsf{R}_{n} \rvert= \binom{2n-3}{n-1}$.
	
For $n\geq 3$ and $2\leq k\leq n-1$, if we consider the disjoint sets
	$$A_{k}=\{ \alpha\in \mathcal{O}_{n} : k\alpha^{-1} =\{k\} \}\,\mbox{ and }\, B_{k}= \{ \alpha\in \mathcal{O}_{n} : k\not\in \im(\alpha)\},$$ 
then it is evident that $\mathsf{R}_{k}= \mathcal{O}_{n} \setminus (A_{k}\cup B_{k})$. Now we use a similar technique to the one introduced in \cite[Lemma 6]{Ayık&Koç:2011}. For each $\alpha\in A_{k}$, if we let 
\begin{eqnarray*}
	&&\alpha_{1}=\left(\begin{matrix}
		1  & 2       & \cdots & k-1 \\
			1\alpha  & 2\alpha & \cdots & (k-1)\alpha
	\end{matrix}\right) \, \mbox{ and}\\
	&&\alpha_{2}=\left(\begin{matrix}
		1              &           2       & \cdots & n-k \\
		(k+1)\alpha-k  & (k+2)\alpha-k & \cdots & n\alpha-k
	\end{matrix}\right), 
\end{eqnarray*}
then it is clear that $\alpha_{1}\in \mathcal{O}_{k-1}$ and $\alpha_{2}\in \mathcal{O}_{n-k}$. Moreover, if we define the mapping
\begin{eqnarray*}
	\varphi :A_{k}\rightarrow \mathcal{O}_{k-1} \times \mathcal{O}_{n-k}\, \mbox{ by }\, \alpha \varphi= (\alpha_{1},\alpha_{2})
\end{eqnarray*}
for all $\alpha\in A_{k}$, then it is a routine matter to check that $\varphi$	is a bijection, and so $\lvert A_{k}\rvert= \binom{2k-3}{k-2} \binom{2n-2k-1}{n-k-1}$. In addition, since $\binom{2n-1}{n-1}- \binom{2n-2}{n-2}= \binom{2n-2}{n-1}$ for all $n\geq 2$, and since the cardinality of $B_{k}$ is equal to the number of ordered integer solutions of the equation 
	$$x_{1}+\cdots +x_{k-1}+ x_{k+1}+\cdots +x_{n}=n\,\, \mbox{ with }\,\, x_{1},\ldots, x_{k-1}, x_{k+1} ,\ldots, x_{n}\geq 0,$$ 
i.e. $\lvert B_k\rvert= \binom{2n-2}{n-2}$, we have $\lvert \mathsf{R}_{k} \rvert =\binom{2n-2}{n-1} -\binom{2k-3}{k-2} \binom{2n-2k-1}{n-k-1}$, as claimed. 
\end{proof}

Finally we determine the cardinalities of the set of zero-divisors of $\mathcal{O}_{n}$ with respect to $\pi_{k}$ for each $1\leq k\leq n$. Since it is easy to check that $\mathsf{Z}_{1}= \mathsf{R}_{1}$ and $\mathsf{Z}_{n}= \mathsf{R}_{n}$ for $n=2$. we suppose $n\geq 3$ in the following lemma.

\smallskip

\begin{lemma}\label{l3}
	Let $n\geq 3$. Then we have $\lvert \mathsf{Z}_{1}\rvert=\lvert \mathsf{Z}_{n}\rvert=\binom{2n-4}{n-2}$ and for each $2\leq k\leq n-1$, we have $\lvert \mathsf{Z}_{k}\rvert=\lvert \mathsf{R}_{k}\rvert -\left( \binom{2n-4}{n-1} -\binom{2k-4}{k-2} \binom{2n-2k-2}{n-k-1} \right)$.
\end{lemma}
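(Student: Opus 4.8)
The plan is to compute $\lvert \mathsf{Z}_{k}\rvert$ directly from the identity $\mathsf{Z}_{k}=\mathsf{R}_{k}\cap \mathsf{L}_{k}$, feeding in the explicit descriptions of $\mathsf{L}_{k}$ and $\mathsf{R}_{k}$ from Lemmas~\ref{l1} and~\ref{l2} and reusing the integer-solution (stars-and-bars) counting already employed there. The endpoint cases $k=1,n$ will be handled by direct description, and the interior cases $2\le k\le n-1$ by passing to the complement inside $\mathsf{R}_{k}$.

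First I would treat $k=1$; the case $k=n$ then follows from the order-reversing symmetry $x\mapsto n+1-x$, whose conjugation is an automorphism of $\mathcal{O}_{n}$ sending $\pi_{k}$ to $\pi_{n+1-k}$ and hence $\mathsf{Z}_{1}$ to $\mathsf{Z}_{n}$. Recall from the proof of Lemma~\ref{l2} that $\alpha\in \mathsf{R}_{1}$ forces $1\alpha=2\alpha=1$, while by Lemma~\ref{l1}$(i)$ the condition $\alpha\in \mathsf{L}_{1}$ means $n\notin \im(\alpha)$. Thus $\mathsf{Z}_{1}=\{\alpha\in \mathcal{O}_{n}:1\alpha=2\alpha=1 \mbox{ and } n\notin \im(\alpha)\}$, so such an $\alpha$ is determined by the non-decreasing tuple $(3\alpha,\ldots,n\alpha)$ with entries in $\{1,\ldots,n-1\}$. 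Counting these as non-negative integer solutions of $x_{1}+\cdots+x_{n-1}=n-2$ yields $\lvert \mathsf{Z}_{1}\rvert=\binom{2n-4}{n-2}$.

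For $2\le k\le n-1$, I would instead count $\mathsf{R}_{k}\setminus \mathsf{Z}_{k}=\mathsf{R}_{k}\cap(\mathcal{O}_{n}\setminus \mathsf{L}_{k})$. By Lemma~\ref{l1}$(iii)$ we have $\mathcal{O}_{n}\setminus \mathsf{L}_{k}=\{\alpha:1\in\im(\alpha)\mbox{ and }n\in\im(\alpha)\}$, and since $1\in\im(\alpha)$ forces $1\alpha=1$ and $n\in\im(\alpha)$ forces $n\alpha=n$, the set $\mathsf{R}_{k}\setminus \mathsf{Z}_{k}$ consists precisely of those $\alpha$ with $1\alpha=1$, $n\alpha=n$, $k\in\im(\alpha)$ and $k\alpha^{-1}\neq\{k\}$. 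Writing $W=\{\alpha\in\mathcal{O}_{n}:1\alpha=1,\ n\alpha=n\}$, $W_{1}=\{\alpha\in W:k\notin\im(\alpha)\}$ and $W_{2}=\{\alpha\in W:k\alpha^{-1}=\{k\}\}$, the sets $W_{1}$ and $W_{2}$ are disjoint and $\mathsf{R}_{k}\setminus \mathsf{Z}_{k}=W\setminus(W_{1}\cup W_{2})$, so that $\lvert \mathsf{R}_{k}\setminus \mathsf{Z}_{k}\rvert=\lvert W\rvert-\lvert W_{1}\rvert-\lvert W_{2}\rvert$.

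Each term is again a stars-and-bars count on the free entries $2\alpha,\ldots,(n-1)\alpha$: ranging over $\{1,\ldots,n\}$ gives $\lvert W\rvert=\binom{2n-3}{n-1}$, and ranging over $\{1,\ldots,n\}\setminus\{k\}$ gives $\lvert W_{1}\rvert=\binom{2n-4}{n-2}$. For $W_{2}$, the condition $k\alpha^{-1}=\{k\}$ splits $\alpha$ into an order-preserving map on $\{1,\ldots,k\}$ fixing $1$ and $k$ with $(k-1)\alpha\le k-1$, and one on $\{k,\ldots,n\}$ fixing $k$ and $n$ with $(k+1)\alpha\ge k+1$; mirroring the bijection $\varphi$ of Lemma~\ref{l2} this produces $\lvert W_{2}\rvert=\binom{2k-4}{k-2}\binom{2n-2k-2}{n-k-1}$. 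Finally, Pascal's identity $\binom{2n-3}{n-1}=\binom{2n-4}{n-1}+\binom{2n-4}{n-2}$ collapses $\lvert W\rvert-\lvert W_{1}\rvert$ to $\binom{2n-4}{n-1}$, giving $\lvert \mathsf{R}_{k}\setminus \mathsf{Z}_{k}\rvert=\binom{2n-4}{n-1}-\binom{2k-4}{k-2}\binom{2n-2k-2}{n-k-1}$, and subtracting from $\lvert \mathsf{R}_{k}\rvert$ yields the claim. I expect the main obstacle to be the bookkeeping in the $W_{2}$ count: one must correctly extract the ranges $(k-1)\alpha\le k-1$ and $(k+1)\alpha\ge k+1$ forced by $k\alpha^{-1}=\{k\}$ and verify the alphabet sizes entering each binomial, after which the Pascal simplification is routine but essential to match the stated form.
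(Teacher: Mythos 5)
Your proof is correct and takes essentially the same route as the paper: the cases $k\in\{1,n\}$ are handled by the same explicit description of $\mathsf{Z}_{1}$ and stars-and-bars count, and the interior cases by counting the complement $\mathsf{R}_{k}\setminus\mathsf{Z}_{k}$ inside $\mathsf{R}_{k}$, split by the condition $k\in\im(\alpha)$ versus $k\alpha^{-1}=\{k\}$, with the same product bijection (onto pairs of maps on $\{1,\ldots,k-1\}$ fixing $1$ and on $\{k+1,\ldots,n\}$ fixing $n$) for the latter. The only cosmetic difference is that the paper counts $\{\alpha:\{1,k,n\}\subseteq\im(\alpha)\}$ directly as $\binom{2n-4}{n-1}$ by forcing three variables to be at least $1$, whereas you reach the same number as $\lvert W\rvert-\lvert W_{1}\rvert$ followed by Pascal's identity.
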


\begin{proof} Since $\mathsf{Z}_{1}=\{ \alpha\in \mathcal{O}_{n} : 1\alpha = 2\alpha =1\,\mbox{ and }\, n\notin \im(\alpha) \}$, one can easily determine as above that $\lvert \mathsf{Z}_{1} \rvert= \binom{2n-4}{n-2}$, and also $\lvert \mathsf{Z}_{n}\rvert=\binom{2n-4}{n-2}$. Moreover, it is easy to check that $\mathsf{Z}_{2}= \mathsf{R}_{2}$ for $n=3$. 
	
Suppose $n\geq 4$, and let $A=\{\alpha\in \mathsf{R}_{k}: \{1,n\} \subseteq \im(\alpha) \}$ for $2\leq k\leq n-1$. It immediately follows from Lemmas \ref{l1} and \ref{l2} that $\mathsf{Z}_{k} =\mathsf{R}_{k} \setminus A$. If we let 
\begin{eqnarray*}
	B&=&\{ \alpha\in \mathcal{O}_{n} :\{1,n\} \subseteq \im(\alpha) \mbox{ and } k\in \im(\alpha)\}\, \mbox{ and}\\
	C&=&\{ \alpha\in \mathcal{O}_{n} :\{1,n\} \subseteq \im(\alpha) \mbox{ and } k\alpha^{-1}=\{k\}\},
\end{eqnarray*}
then it is clear from Lemma \ref{l2} that $A=B\setminus C$. By considering the equation $x_{1}+x_{2} +\cdots+ x_{n}=n$ with $x_{2},\ldots, x_{k-1}, x_{k+1} ,\ldots, x_{n-1} \geq0$ and $x_{1} ,x_{k}, x_{n} \geq1$, we similarly have $\lvert B\rvert= \binom{2n-4}{n-1}$. Moreover, if we let 
	$$D=\{ \alpha\in \mathcal{O}_{k-1} : 1\alpha=1\}\, \mbox{ and }\, E=\{ \alpha\in \mathcal{O}_{n-k} : (n-k)\alpha=n-k\},$$ 
then for each $\alpha\in C$, with a similar technique to the one used in the proof of Lemma \ref{l2}, we define the bijection $\varphi :C \rightarrow D\times E$ by $\alpha \phi= (\alpha_{1},\alpha_{2})$ where
\begin{eqnarray*}
\alpha_{1}&=&\left(\begin{array}{cccc}
		1 &     2   & \cdots &     k-1    \\
		1 & 2\alpha & \cdots & (k-1)\alpha
\end{array}\right)\in D \mbox{ and}\\
\alpha_{2}&=&\left(\begin{array}{cccc}
		1      & \cdots &    n-k-1     & n-k \\
		(k+1)\alpha-k  & \cdots &(n-1)\alpha-k & n-k
\end{array}\right) \in E
\end{eqnarray*}
so that $\lvert C\rvert =\lvert D\rvert \, \lvert E\rvert= \binom{2k-4}{k-2} \binom{2n-2k-2}{n-k-1}$, and so the proof is now completed. 
\end{proof}

\section{Generating sets and ranks}\label{sec3}

In this section, since $\mathsf{R}_{k}$ and $\mathsf{Z}_{k}$ are subsemigroups of $\mathcal{O}_{n}$ if and only if $k=1$ or $k=n$, and since $\mathsf{L}_{k}$ is a subsemigroup of $\mathcal{O}_{n}$ for each $1\leq k\leq n$, we determine  some generating sets and the ranks of $\mathsf{R}_{1}$, $\mathsf{R}_{n}$, $\mathsf{Z}_{1}$, $\mathsf{Z}_{n}$ and $\mathsf{L}_{k}$ for all  $1\leq k\leq n$. Furthermore, to avoid stating the obvious, we suppose that $n\geq 3$ for the rest of the paper. 

A non-empty subset $A$ of $X_{n}$ is said to be \emph{convex} if $x<z<y$ implies $z\in A$ for all $x,y\in A$ and $z\in X_{n}$. For any two non-empty subsets $A$ and $B$ of $X_{n}$, we write $A<B$ if $a<b$ for all $a\in A$ and $b\in B$. Let $P=\{ A_{1}, \ldots ,A_{r} \}$ ($1\leq r\leq n$) be a partition of $X_{n}$, that is a family of non-empty disjoint subsets of $X_{n}$ whose union is $X_{n}$. If $A_{i}$ is a convex subset of $X_{n}$ for each $1\leq i\leq r$, then $P$ is called  a \emph{convex} partition of $X_{n}$, and if $A_{i}< A_{i+1}$ for every $1\leq i\leq r-1$, then $P$ is called an \emph{ordered} partition of $X_{n}$. A transformation $\alpha\in \mathcal{O}_{n}$ with $\im(\alpha)=\{a_{1}<\cdots<a_{r}\}$ can be expressed as
$\alpha=\left(
\begin{matrix}
A_{1}  & \cdots & A_{r} \\
a_{1}  & \cdots & a_{r}
\end{matrix}\right)$ 
where $A_{i}=a_{i}\alpha^{-1}$ for every $1\leq i\leq r$. Then notice that  $\{A_{1}, \ldots, A_{r}\}$ is a convex and ordered partition of $X_{n}$. 

Let $S$ be any subsemigroup of $\mathcal{O}_{n}$, and for each $1\leq r\leq n$, we let 
$$D_{r}= D_{r}(S) =\{ \alpha\in S : \lvert \im(\alpha) \rvert =r\} .$$
Moreover, for each $2\leq i\leq n-1$, we define
\begin{align*}
\xi_{i}&=\left(\begin{array}{ccccccccccc}
1  & \cdots & i-2 & i-1 & i & \cdots & n-1 &  n \\
1  & \cdots & i-2 &  i  & i & \cdots & n-1 & n-1
\end{array}\right) \, \mbox{ and} \\ 
\zeta_{i}&=\left(\begin{array}{ccccccccccc}
1  & 2 & \cdots & i & i+1 & i+2 & \cdots & n \\
2  & 2 & \cdots & i &  i  & i+2 & \cdots & n
\end{array}\right),  
\end{align*}
and we let
$$E^{+}=\{\xi_{i} : 2\leq i\leq n-1\} \mbox{ and } E^{-}=\{ \zeta_{i}: 2\leq j\leq n-1\}.$$ 
Then it is clear that $E^{+} \subseteq \mathsf{L}_{k}$ for each $1\leq k\leq n-1$, and that $E^{-} \subseteq \mathsf{L}_{k}$ for each $2\leq k\leq n$. Furthermore, for  each $1\leq i\leq n-1$, we let 
\begin{eqnarray*}
	&&\beta_{i}=\left(\begin{array}{ccccccc}
		1 & 2 &\cdots &  i  & i+1 &\cdots & n \\
		2 & 3 &\cdots & i+1 & i+1 &\cdots & n
	\end{array}\right) \mbox { and} \\
	&&\gamma_{i}=\left(\begin{array}{ccccccc}
		1 &\cdots & i & i+1 & i+2 &\cdots  &  n \\
		1 &\cdots & i &  i  & i+1 &\cdots  & n-1
	\end{array}\right),
\end{eqnarray*} 
then it is also clear that 
\begin{eqnarray*}
	D_{n-1}(\mathsf{L}_{1}) =\{ \gamma_{i} : 1\leq i\leq n-1\} &\mbox{and}& D_{n-1}(\mathsf{L}_{n}) =\{ \beta_{i} : 1\leq i\leq n-1\}.
\end{eqnarray*} 
Remember that $\mathcal{IO}_{n}$ denotes the subsemigroup of $\mathcal{O}_{n}$ consisting of all transformations in $\mathcal{O}_{n}$ whose images are intervals (convex subsets) of $X_{n}$. It is shown in \cite{Fernandes&Paulista:2023} that $D_{n-1}(\mathsf{L}_{1}) \cup D_{n-1}(\mathsf{L}_{n})$ is a generating set of $\mathcal{IO}_{n}$. More importantly, we have the following result from \cite{Fernandes&Paulista:2023}.

\smallskip

\begin{theorem}\label{t4}\cite[Theorem 3.5]{Fernandes&Paulista:2023}
	For $n\geq 3$, $\{ \gamma_{1} ,\ldots, \gamma_{n-2}, \beta_{n-1} \}$ is a minimal generating set of $\mathcal{IO}_{n}$, and so $\rank(\mathcal{IO}_{n})=n-1$. \hfill $\square$
\end{theorem}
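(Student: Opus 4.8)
The plan is to prove the two inequalities $\rank(\mathcal{IO}_{n})\ge n-1$ and $\rank(\mathcal{IO}_{n})\le n-1$ separately, working in $\mathcal{IO}_{n}$ as a monoid so that the identity $\id$ (the unique element of rank $n$) is supplied by the empty product and need not be listed among the generators. The backbone of both bounds is the structure of the set $D_{n-1}=D_{n-1}(\mathcal{IO}_{n})$ of rank-$(n-1)$ elements, which is exactly $D_{n-1}(\mathsf{L}_{1})\cup D_{n-1}(\mathsf{L}_{n})=\{\gamma_{1},\dots,\gamma_{n-1}\}\cup\{\beta_{1},\dots,\beta_{n-1}\}$, a single $\mathcal{D}$-class of $\mathcal{O}_{n}$ lying inside $\mathcal{IO}_{n}$. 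Each of its $2(n-1)$ members has one of the two interval images $\{1,\dots,n-1\}$ (the $\gamma_{i}$) or $\{2,\dots,n\}$ (the $\beta_{i}$), and one of the $n-1$ convex kernels whose only nontrivial block is $\{i,i+1\}$. Thus $D_{n-1}$ is an $\mathcal{H}$-trivial egg-box with $n-1$ $\mathcal{R}$-classes (indexed by the kernel) and $2$ $\mathcal{L}$-classes (indexed by the image). Crucially, it is recalled above that $D_{n-1}$ generates $\mathcal{IO}_{n}$, so the whole problem reduces to analysing how $D_{n-1}$ itself is generated.

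For the upper bound it therefore suffices to show that $A=\{\gamma_{1},\dots,\gamma_{n-2},\beta_{n-1}\}$ generates the full top $\mathcal{D}$-class $D_{n-1}$, since then $\langle A\rangle\supseteq\langle D_{n-1}\rangle=\mathcal{IO}_{n}$. The elements of $D_{n-1}$ missing from $A$ are $\beta_{1},\dots,\beta_{n-2}$ and $\gamma_{n-1}$, and I would recover them by the explicit factorisations $\beta_{i}=\gamma_{i}\,\beta_{n-1}$ for $1\le i\le n-2$ and $\gamma_{n-1}=\beta_{n-1}\,\gamma_{1}$; a direct composition of the two-line forms confirms both identities (the first shifts the image column from $\{1,\dots,n-1\}$ to $\{2,\dots,n\}$ while preserving the kernel $\{i,i+1\}$, the second does the reverse while installing the kernel $\{n-1,n\}$). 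Hence $\langle A\rangle\supseteq D_{n-1}$ and $\rank(\mathcal{IO}_{n})\le n-1$.

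For the lower bound I would use stability of Green's relations in the finite semigroup $\mathcal{IO}_{n}$. Fix a kernel $\kappa_{j}$ ($1\le j\le n-1$) and consider $\gamma_{j}\in D_{n-1}$ with that kernel. In any expression $\gamma_{j}=g_{1}g_{2}\cdots g_{m}$ as a product of generators, every factor has rank at least $\rank(\gamma_{j})=n-1$, so after deleting any $\id$-factors each $g_{t}$ lies in $D_{n-1}$. Then $\gamma_{j}\,\mathcal{J}\,g_{1}$, and stability gives $\gamma_{j}\,\mathcal{R}\,g_{1}$, i.e.\ $g_{1}$ has kernel $\kappa_{j}$. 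Thus every one of the $n-1$ distinct kernels must occur as the kernel of some generator; since each generator has exactly one kernel, at least $n-1$ generators are required, so $\rank(\mathcal{IO}_{n})\ge n-1$. This also explains the shape of $A$: the kernels of $\gamma_{1},\dots,\gamma_{n-2}$ are $\{1,2\},\dots,\{n-2,n-1\}$ and that of $\beta_{n-1}$ is $\{n-1,n\}$, so $A$ meets every $\mathcal{R}$-class exactly once.

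The step I expect to be the main obstacle is the lower bound: one must justify that the leading factor genuinely lies in the top $\mathcal{D}$-class (the rank inequality handles this) and then invoke stability correctly, while also being careful that the monoid convention is what makes a count of $n-1$ possible at all — under pure semigroup generation $\id$ would have to be adjoined. By contrast, given the already-established fact that $D_{n-1}$ generates $\mathcal{IO}_{n}$, the upper bound is essentially the bookkeeping of the two families of explicit products above. Combining the two bounds yields $\rank(\mathcal{IO}_{n})=n-1$ and shows that $A$ is a minimal generating set.
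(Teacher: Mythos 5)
The first thing to say is that the paper contains no proof of this statement: Theorem~\ref{t4} is imported verbatim from \cite[Theorem 3.5]{Fernandes&Paulista:2023}, together with the fact, quoted just before it, that $D_{n-1}(\mathsf{L}_{1})\cup D_{n-1}(\mathsf{L}_{n})$ generates $\mathcal{IO}_{n}$. So your proposal can only be measured against the cited source, and on its own terms it is essentially sound. Your upper bound is correct: the factorisations $\beta_{i}=\gamma_{i}\beta_{n-1}$ for $1\leq i\leq n-2$ and $\gamma_{n-1}=\beta_{n-1}\gamma_{1}$ both check out under left-to-right composition (they are in fact the same relations the paper itself exploits in the proof of Lemma~\ref{l7}), so $\langle\{\gamma_{1},\dots,\gamma_{n-2},\beta_{n-1}\}\rangle$ contains all of $D_{n-1}$ and hence, granting the quoted generation fact, all of $\mathcal{IO}_{n}$. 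Your remark about the monoid convention is also correct and worth making explicit: no product of rank-$(n-1)$ maps can equal $\id$, so the theorem is only true when $\id$ is supplied by monoid generation.

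There are, however, two caveats. First, the substantive content of the cited theorem---that every $\alpha\in\mathcal{IO}_{n}$ with $\lvert\im(\alpha)\rvert\leq n-2$ is a product of elements of $D_{n-1}$---is assumed rather than proved in your argument; what you have written is a correct derivation of the rank from that fact, not a self-contained proof of the theorem, and this assumption is precisely where the real work lies in \cite{Fernandes&Paulista:2023}. Second, your lower bound invokes stability ``in the finite semigroup $\mathcal{IO}_{n}$'' after asserting $\gamma_{j}\,\mathcal{J}\,g_{1}$, but Green's relations on a subsemigroup can be strictly finer than on the ambient semigroup, and you have only observed that $D_{n-1}$ is a $\mathcal{D}$-class of $\mathcal{O}_{n}$, not that it is a single $\mathcal{J}$-class of $\mathcal{IO}_{n}$. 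This gap is easily repaired: either run the stability argument inside the finite semigroup $\mathcal{O}_{n}$, where equal rank does imply $\mathcal{J}$-equivalence and $\mathcal{R}$-equivalence does mean equal kernel, or drop Green's relations entirely and argue directly that $\gamma_{j}=g_{1}(g_{2}\cdots g_{m})$ forces $\ker(g_{1})\subseteq\ker(\gamma_{j})$ as equivalence relations, whence $\ker(g_{1})=\ker(\gamma_{j})$ because both have exactly $n-1$ classes. With that repair, your count of $n-1$ distinct kernels among any set of generators is valid, and the two bounds combine exactly as you claim.
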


\smallskip

Furthermore, for a non-empty subset $Y$ of $X_{n}$, let $\mathcal{O}_{n}(Y) =\{ \alpha\in \mathcal{O}_{n}: \im(\alpha)\subseteq Y\}$ which is a subsemigroup of $\mathcal{O}_{n}$ and studied in \cite{Fernandes&Honyam&. Quinteiro& Singha:2014}. An element $y\in Y$ is called \emph{captive} if either $y\in \{1,n\}$ or $1<y<n$ and $y-1,\, y+1\in Y$. Now let $Y^{\sharp}$ denote the subset of captive elements of $Y$. With this notation, Fernandes et. al. proved the following theorem.

\begin{theorem}\label{t5}\cite[Theorem 4.3]{Fernandes&Honyam&. Quinteiro& Singha:2014}
	Let $1<r<n$ and $Y$ be a subset of $X_{n}$ with $r$ elements. Then $\rank(\mathcal{O}_{n}(Y))=\binom{n-1}{r-1}+\lvert Y^{\sharp}\rvert$. \hfill $\square$
\end{theorem}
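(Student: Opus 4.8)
The plan is to run the standard two–layer analysis of a rank computation: first pin down the top $\mathcal{D}$–class of $\mathcal{O}_n(Y)$ and show that every one of its elements is forced into any generating set, then measure how many extra generators are needed to descend to the lower ranks, proving that this number is exactly $\lvert Y^{\sharp}\rvert$.

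First I would set $D_r=D_r(\mathcal{O}_n(Y))=\{\alpha\in\mathcal{O}_n(Y):\im(\alpha)=Y\}$, the elements of maximal rank $r$. Writing such an $\alpha$ as $\left(\begin{smallmatrix}A_1&\cdots&A_r\\ y_1&\cdots&y_r\end{smallmatrix}\right)$ with $Y=\{y_1<\cdots<y_r\}$, the kernel $(A_1,\dots,A_r)$ is an ordered convex partition of $X_n$ into $r$ blocks, so $\lvert D_r\rvert$ equals the number of such partitions, namely $\binom{n-1}{r-1}$ (choose $r-1$ of the $n-1$ gaps). The key observation is that $\lvert\im(\alpha\beta)\rvert\le\min\{\lvert\im(\alpha)\rvert,\lvert\im(\beta)\rvert\}$, so a product lands in $D_r$ only when both factors do; and computing $\beta\gamma$ on kernels shows that $\beta\gamma\in D_r$ forces $\gamma$ to fix $Y$ pointwise and hence $\beta\gamma=\beta$. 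Thus no element of $D_r$ is a product of two elements of $\mathcal{O}_n(Y)\setminus\{\alpha\}$: every element of $D_r$ is undecomposable, so $D_r$ is contained in every generating set, contributing the summand $\binom{n-1}{r-1}$.

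It then remains to determine the minimum number $m$ of elements of rank $<r$ that must be adjoined to $D_r$ to generate all of $\mathcal{O}_n(Y)$, and to prove $m=\lvert Y^{\sharp}\rvert$. Here captivity enters through which images are reachable inside $\langle D_r\rangle$. For the necessity of a generator at an end captive, note that if $1\in Y$ then $1\in\im(\alpha)\iff 1\alpha=1$, and if every generator $g$ satisfies $1g=1$ then $1(g_1\cdots g_k)=1$ for every product, so no generated element could omit $1$ from its image; the same holds at $n$. More generally, analysing $\beta\gamma$ with $\beta,\gamma\in D_r$ shows that $y_s$ can be removed from the top image $Y$ by one multiplication exactly when $y_s\notin\{1,n\}$ and at least one of $y_s\pm1$ lies outside $Y$ (so that a convex block of gap–points can absorb the role of $y_s$), that is, exactly when $y_s$ is \emph{non}-captive; the captive points are precisely those that cannot be peeled off from above. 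This is the combinatorial source of the second summand $\lvert Y^{\sharp}\rvert$.

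For the upper bound I would exhibit an explicit augmenting set $W$ of $\lvert Y^{\sharp}\rvert$ rank-$(r-1)$ idempotents, one attached to each captive point, and prove $\langle D_r\cup W\rangle=\mathcal{O}_n(Y)$ by downward induction on the rank: right– and left–multiplication by $D_r$ move an element through all kernels and admissible images of a fixed rank, while the idempotents in $W$ supply the images that $\langle D_r\rangle$ cannot reach. The main obstacle is the exact matching $m=\lvert Y^{\sharp}\rvert$. The forced–generator argument above only shows that each captive point requires \emph{some} omitting generator, and it does not by itself forbid one generator from serving several captives (indeed a single map may omit several points at once). The hard part is therefore the lower bound: one must show that the elements whose images omit exactly one captive point while retaining all the others cannot be assembled from $D_r$ together with fewer than $\lvert Y^{\sharp}\rvert$ additional elements. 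I expect this to require a precise description of the set of images attainable in $\langle D_r\cup W\rangle$ (a gap/interval invariant of $Y$ within $X_n$), together with a Green's–class count in $D_{r-1}$ establishing that each captive point contributes an independent, irremovable obstruction.
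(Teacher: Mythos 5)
First, a point of reference: the paper gives no proof of this statement at all --- it is imported verbatim from \cite[Theorem 4.3]{Fernandes&Honyam&. Quinteiro& Singha:2014} and used as a black box (hence the $\square$ right after the statement). So your proposal must stand on its own, and it cannot be faulted for diverging from an internal argument. Its first half is indeed correct: $D_{r}=\{\alpha\in\mathcal{O}_{n}(Y):\im(\alpha)=Y\}$ has exactly $\binom{n-1}{r-1}$ elements (ordered convex partitions of $X_{n}$ into $r$ blocks), and your undecomposability argument works --- if $\beta\gamma\in D_{r}$ with $\beta,\gamma\in\mathcal{O}_{n}(Y)$, then both factors lie in $D_{r}$, and $\gamma$ restricted to $Y$ is an injective order-preserving self-map of a finite chain, hence the identity on $Y$, so $\beta\gamma=\beta$.

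The genuine gap is the one you flag yourself: the lower bound of $\lvert Y^{\sharp}\rvert$ extra generators, and the reason you could not close it is that your ``peeling'' dichotomy is false as literally stated. Write $Y=\{y_{1}<\cdots<y_{r}\}$. Take $n=10$, $Y=\{2,3,4\}$ and $\gamma\in D_{r}$ with blocks $\{1,\ldots,5\}\mapsto 2$, $\{6\}\mapsto 3$, $\{7,\ldots,10\}\mapsto 4$; then $Y\gamma=\{2\}$, so the captive point $3$ \emph{is} removed from the image by one multiplication: $\im(\beta\gamma)=\{2\}$ for every $\beta\in D_{r}$. What is true, and what the proof needs, is the corank-one version: \emph{for captive $y_{s}$ there is no $\gamma\in D_{r}$ with $Y\gamma=Y\setminus\{y_{s}\}$}. (If $y_{s}\in\{1,n\}$, every $\gamma\in D_{r}$ satisfies $1\gamma=1$, respectively $n\gamma=n$. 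If $y_{s}$ is an interior captive point, the block $y_{s}\gamma^{-1}$ is a nonempty convex set disjoint from $Y$, hence lies entirely below $y_{s}-1$ or entirely above $y_{s}+1$; in the first case at most $s-2$ elements of $Y$ lie below that block, too few to map onto $\{y_{1},\ldots,y_{s-1}\}\subseteq Y\gamma$, and the other case is symmetric.) Granting this lemma, induct on the length of a product: if $\im(\alpha)=Y\setminus\{y_{s}\}$ with $y_{s}$ captive and $\alpha=\alpha_{1}\cdots\alpha_{k}$ with all $\alpha_{i}$ in a generating set $A$, then either $\im(\alpha_{k})=Y\setminus\{y_{s}\}$, or $\alpha_{k}\in D_{r}$ and $Y\setminus\{y_{s}\}\subseteq Y\alpha_{k}\subseteq Y$, which by the lemma forces $Y\alpha_{k}=Y$; then $\alpha_{k}$ fixes $Y$ pointwise, so $\alpha=\alpha_{1}\cdots\alpha_{k-1}$ and the induction continues. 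Hence $A$ contains, for each captive $y$, an element whose image is \emph{exactly} $Y\setminus\{y\}$. Distinct captive points give distinct such images, so these forced generators are pairwise distinct and lie outside $D_{r}$ --- which is precisely what dissolves your worry that one generator might serve several captives, and yields $\lvert A\rvert\geq\binom{n-1}{r-1}+\lvert Y^{\sharp}\rvert$. Separately, your upper bound is also only a sketch: you still owe the construction of the $\lvert Y^{\sharp}\rvert$ auxiliary elements and the downward induction showing that $D_{r}$ together with them generates all of $\mathcal{O}_{n}(Y)$ (here the converse of the lemma is what you need: if $y_{s}$ is \emph{not} captive, a modified floor/ceiling map onto $Y$ exhibits $\gamma\in D_{r}$ with $Y\gamma=Y\setminus\{y_{s}\}$). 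Without these two pieces the proposal is a plausible plan, not a proof.
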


By Theorem \ref{t5}, since $\mathsf{L}_{1}=\mathcal{O}_{n}(X_{n-1})$ and $X_{n-1}^{\sharp} =X_{n-2}$, and by the duality, we have the following immediate corollary.

\smallskip

\begin{corollary}\label{c6}
	$\rank(\mathsf{L}_{1})=\rank(\mathsf{L}_{n})=2n-3$. \hfill $\square$
\end{corollary}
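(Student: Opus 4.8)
The plan is to read off the rank of $\mathsf{L}_{1}$ directly from Theorem \ref{t5} and then transfer the value to $\mathsf{L}_{n}$ by symmetry. The statement is billed as immediate precisely because it is a specialisation of Theorem \ref{t5}; the only genuine work is to verify the two ingredients that feed into that theorem, namely the identification of $\mathsf{L}_{1}$ as a semigroup of the form $\mathcal{O}_{n}(Y)$ and the determination of the captive set $Y^{\sharp}$.

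First I would record that, by Lemma \ref{l1}$(i)$, $\mathsf{L}_{1}=\{\alpha\in\mathcal{O}_{n}: n\notin\im(\alpha)\}$, which is exactly the condition $\im(\alpha)\subseteq X_{n-1}$; hence $\mathsf{L}_{1}=\mathcal{O}_{n}(X_{n-1})$. Taking $Y=X_{n-1}$, so that $r=\lvert Y\rvert=n-1$ and the hypothesis $1<r<n$ of Theorem \ref{t5} is met for $n\geq 3$, I would next compute $Y^{\sharp}$. The endpoint $1\in\{1,n\}$ is captive; for each $2\leq y\leq n-2$ both neighbours $y-1,\,y+1$ lie in $X_{n-1}$, so every such $y$ is captive; but $n-1$ fails to be captive, since its right neighbour $n$ is not in $Y$. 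Thus $Y^{\sharp}=X_{n-2}$ and $\lvert Y^{\sharp}\rvert=n-2$.

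With these in hand, Theorem \ref{t5} yields $\rank(\mathsf{L}_{1})=\binom{n-1}{n-2}+(n-2)=(n-1)+(n-2)=2n-3$. To obtain the same value for $\mathsf{L}_{n}$, I would invoke the order-reversing duality: conjugation by the involution $h:x\mapsto n+1-x$ is an automorphism of $\mathcal{O}_{n}$ (the triple $h\alpha h$ reverses order twice and preserves it once, hence lies in $\mathcal{O}_{n}$, and $h^{2}=\id$ makes $\alpha\mapsto h\alpha h$ a self-inverse homomorphism). This automorphism sends $\pi_{k}$ to $\pi_{n+1-k}$ and replaces $\im(\alpha)$ by its reflection, so $n\notin\im(\alpha)$ exactly when $1\notin\im(h\alpha h)$; it therefore carries $\mathsf{L}_{1}$ isomorphically onto $\mathsf{L}_{n}$. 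Since an isomorphism preserves rank, $\rank(\mathsf{L}_{n})=\rank(\mathsf{L}_{1})=2n-3$.

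I do not anticipate any serious obstacle, as everything reduces to Theorem \ref{t5} together with a structural automorphism. The single point demanding care is the bookkeeping of captive elements: one must notice that the right endpoint $n-1$ of $X_{n-1}$ is \emph{not} captive, which is exactly what forces $\lvert Y^{\sharp}\rvert=n-2$ rather than $n-1$ and hence produces $2n-3$ rather than $2n-2$.
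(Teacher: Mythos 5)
Your proposal is correct and follows exactly the paper's route: the paper likewise obtains the corollary by noting $\mathsf{L}_{1}=\mathcal{O}_{n}(X_{n-1})$ (via Lemma \ref{l1}$(i)$), that $X_{n-1}^{\sharp}=X_{n-2}$, applying Theorem \ref{t5}, and invoking duality for $\mathsf{L}_{n}$. You merely spell out the details the paper leaves implicit, namely the captive-set bookkeeping and the explicit order-reversing isomorphism $\alpha\mapsto h\alpha h$, both of which you verify correctly.
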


From now on $\mathsf{L}_{k}$ for $2\leq k\leq n-1$. Since $\mathsf{L}_{2} =\cdots= \mathsf{L}_{n-1}$ for $n\geq 3$, we fix $k=2$ and consider the semigroup $\mathsf{L}_{2}$. For $n=3$, it is also easy to check that
$\mathsf{L}_{2} =\langle \left\{ \left( \begin{smallmatrix}
1 & 2 & 3 \\
1 & 1 & 2
\end{smallmatrix}\right),  \left( \begin{smallmatrix}
1 & 2 & 3 \\
2 & 3 & 3
\end{smallmatrix}\right)\right\}  \rangle$, and that $\mathsf{L}_{2}$ has rank $2$. So we consider the case $n\geq 4$. Moreover, it is shown in \cite[Lemmas 4.9 and 4.10]{Fernandes&Honyam&. Quinteiro& Singha:2014} that $D_{n-1}(\mathsf{L}_{1}) \cup E^{+}$ and  $D_{n-1}(\mathsf{L}_{n}) \cup E^{-}$ are minimal generating sets of $\mathsf{L}_{1}$ and $\mathsf{L}_{n}$, respectively. Thus, $D_{n-1}(\mathsf{L}_{1}) \cup E^{+}\cup D_{n-1}(\mathsf{L}_{n}) \cup E^{-}$ is a generating set of $\mathsf{L}_{2} =\mathsf{L}_{1} \cup \mathsf{L}_{n}$. By using these results, we have the following.

\smallskip

\begin{lemma}\label{l7}
	For $n\geq 4$, $B=\{\gamma_{1}, \beta_{2}, \dots, \beta_{n-1}, \xi_{3}, \dots, \xi_{n-1}\}$ is a generating set of $\mathsf{L}_{2}$. 	
\end{lemma}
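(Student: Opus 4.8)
The plan is to establish the nontrivial inclusion $\mathsf{L}_{2}\subseteq\langle B\rangle$, the reverse inclusion $\langle B\rangle\subseteq\mathsf{L}_{2}$ being immediate from $B\subseteq\mathsf{L}_{2}$ and the fact that $\mathsf{L}_{2}$ is a subsemigroup. Since $D_{n-1}(\mathsf{L}_{1})\cup E^{+}\cup D_{n-1}(\mathsf{L}_{n})\cup E^{-}=\{\gamma_{1},\ldots,\gamma_{n-1}\}\cup\{\xi_{2},\ldots,\xi_{n-1}\}\cup\{\beta_{1},\ldots,\beta_{n-1}\}\cup\{\zeta_{2},\ldots,\zeta_{n-1}\}$ is already known to generate $\mathsf{L}_{2}$, it is enough to show that each of its elements lies in $\langle B\rangle$. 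As $B$ already contains $\gamma_{1}$, the maps $\beta_{2},\ldots,\beta_{n-1}$, and $\xi_{3},\ldots,\xi_{n-1}$, the task reduces to expressing $\beta_{1}$, the maps $\gamma_{i}$ for $2\le i\le n-1$, the map $\xi_{2}$, and all of $\zeta_{2},\ldots,\zeta_{n-1}$ as products of elements of $B$.

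First I would record three short composition identities, each verified by a one-line computation of images: $\gamma_{1}\beta_{n-1}=\beta_{1}$, then $\beta_{i}\gamma_{1}=\gamma_{i}$ for every $2\le i\le n-1$, and finally $\beta_{1}\gamma_{n-1}=\xi_{2}$. The first identity gives $\beta_{1}\in\langle B\rangle$, so that $D_{n-1}(\mathsf{L}_{n})\subseteq\langle B\rangle$; the second produces $\gamma_{2},\ldots,\gamma_{n-1}$; and the third, using the $\gamma_{n-1}$ just obtained, produces $\xi_{2}$. At this stage $\langle B\rangle$ contains the whole set $\{\gamma_{1},\ldots,\gamma_{n-1}\}\cup\{\xi_{2},\ldots,\xi_{n-1}\}$, which is a generating set of $\mathsf{L}_{1}$, and hence $\mathsf{L}_{1}\subseteq\langle B\rangle$.

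The only generators left are the elements $\zeta_{j}$ of $E^{-}$, and this is where I expect the real difficulty. The obvious factorizations of $\zeta_{j}$ are useless here: writing $\zeta_{j}$ as $\beta_{1}$ followed by the idempotent that sends $j+1$ to $j$ and fixes every other point fails because, for $2\le j\le n-2$, that idempotent has image $X_{n}\setminus\{j+1\}$, which lies in neither $\mathsf{L}_{1}$ nor $\mathsf{L}_{n}$ and so outside $\mathsf{L}_{2}\supseteq\langle B\rangle$. The device I would use instead is to route the decomposition through $\mathsf{L}_{1}$: I claim that $\zeta_{j}=(\zeta_{j}\gamma_{1})\,\beta_{n-1}$ for every $2\le j\le n-1$. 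Indeed $\im(\zeta_{j}\gamma_{1})\subseteq\im(\gamma_{1})=\{1,\ldots,n-1\}$, so $\zeta_{j}\gamma_{1}\in\mathsf{L}_{1}\subseteq\langle B\rangle$; and combining $\gamma_{1}\beta_{n-1}=\beta_{1}$ with the fact that $\beta_{1}$ fixes $\im(\zeta_{j})\subseteq\{2,\ldots,n\}$ pointwise gives $(\zeta_{j}\gamma_{1})\beta_{n-1}=\zeta_{j}(\gamma_{1}\beta_{n-1})=\zeta_{j}\beta_{1}=\zeta_{j}$. Thus every $\zeta_{j}\in\langle B\rangle$, so $E^{-}\subseteq\langle B\rangle$ and hence $\mathsf{L}_{n}\subseteq\langle B\rangle$.

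Putting the two halves together yields $\mathsf{L}_{2}=\mathsf{L}_{1}\cup\mathsf{L}_{n}\subseteq\langle B\rangle$, which finishes the argument. The heart of the proof is the uniform identity $\zeta_{j}=(\zeta_{j}\gamma_{1})\beta_{n-1}$: it sidesteps the need to exhibit an explicit product for each $\zeta_{j}$ by passing the decomposition through $\mathsf{L}_{1}$ (already shown to sit inside $\langle B\rangle$), the two decisive facts being $\gamma_{1}\beta_{n-1}=\beta_{1}$ and that $\beta_{1}$ acts as the identity on $\im(\zeta_{j})$.
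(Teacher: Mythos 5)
Your proposal is correct and follows essentially the same route as the paper: the same identities $\beta_{1}=\gamma_{1}\beta_{n-1}$ and $\gamma_{i}=\beta_{i}\gamma_{1}$ give $\mathsf{L}_{1}\subseteq\langle B\rangle$, and your key factorization $\zeta_{j}=(\zeta_{j}\gamma_{1})\beta_{n-1}$ is exactly the paper's decomposition $\zeta_{i}=\zeta'_{i}\beta_{n-1}$, since the element the paper writes out explicitly as $\zeta'_{i}$ coincides with $\zeta_{i}\gamma_{1}$. The only cosmetic differences are your expression $\xi_{2}=\beta_{1}\gamma_{n-1}$ in place of the paper's $\xi_{2}=\beta_{1}\beta_{n-1}\gamma_{1}$, and your uniform treatment of all $\zeta_{j}$ (including $j=n-1$) rather than handling $\zeta_{n-1}=\xi_{2}$ separately.
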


\begin{proof}
In order to prove that $B$ is a generating set of $\mathsf{L}_{2}$, we need to show that $\beta_{1}, \xi_{2}\in \langle B\rangle$ and $(D_{n-1}(\mathsf{L}_{1}) \setminus \{\gamma_{1}\}) \cup E^{-}\subseteq  \langle B\rangle$. It is routine matter to check that $\beta_{1} =\gamma_{1} \beta_{n-1}$,\, $\gamma_{i} =\beta_{i} \gamma_{1}$ for each $2\leq i\leq n-1$ and $\zeta_{n-1} =\xi_{2}= \beta_{1} \beta_{n-1} \gamma_{1}$, and so $\mathsf{L}_{1}= \langle D_{n-1}(\mathsf{L}_{1}) \cup E^{+} \rangle \subseteq \langle B\rangle$. For each $2\leq i\leq n-2$, if we let 
	$$\zeta^{'}_{i} =\left( \begin{matrix}
	1  & 2 & \cdots &  i  & i+1 & i+2 & \cdots &  n \\
	1  & 1 & \cdots & i-1 & i-1 & i+1 & \cdots & n-1
	\end{matrix}\right),$$
then it is clear that $\zeta^{'}_{i}\in \mathsf{L}_{1}$, and so $\zeta^{'}_{i}\in \langle B\rangle$. Since $\zeta_{i}= \zeta^{'}_{i} \beta_{n-1}$ for each $2\leq i\leq n-2$, the proof is now completed.
\end{proof}

\smallskip

\begin{theorem}\label{t8}
	For  $n\geq 3$, the rank of $\mathsf{L}_{2}$ is $2n-4$.
\end{theorem}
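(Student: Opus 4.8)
The plan is to read the upper bound straight off Lemma~\ref{l7} and then supply a matching lower bound by a layer-by-layer analysis of an arbitrary generating set. Since $B=\{\gamma_1,\beta_2,\dots,\beta_{n-1},\xi_3,\dots,\xi_{n-1}\}$ has $1+(n-2)+(n-3)=2n-4$ elements, $\rank(\mathsf{L}_2)\le 2n-4$. For the reverse inequality I would fix any generating set $A$ and bound $|A\cap D_{n-1}|$ and $|A\cap D_{n-2}|$ separately, where $D_r=D_r(\mathsf{L}_2)$; the identity is not in $\mathsf{L}_2$ (its image contains both $1$ and $n$), so $D_{n-1}$ is the top layer. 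The guiding principle is that $\rank$ is non-increasing along products, so any factorisation of an element of rank $r$ uses only factors of rank $\ge r$, with the leftmost factor controlling the kernel and the rightmost factor controlling the image of the product. I aim to prove $|A\cap D_{n-1}|\ge n-1$ and $|A\cap D_{n-2}|\ge n-3$; as these layers are disjoint, this gives $\rank(\mathsf{L}_2)\ge 2n-4$.

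For the top layer, every rank-$(n-1)$ element of $\mathsf{L}_2$ collapses exactly one adjacent pair $\{i,i+1\}$ and has image $\{1,\dots,n-1\}$ or $\{2,\dots,n\}$; these are exactly the elements $\gamma_i$ and $\beta_i$. If such an element is a product of generators then, by the rank principle, all factors lie in $D_{n-1}$, and since $\ker$ of the product contains $\ker$ of the leftmost factor, that leftmost factor collapses the same pair $\{i,i+1\}$. Hence to generate $\gamma_i$ (say) the set $A$ must contain at least one rank-$(n-1)$ element collapsing $\{i,i+1\}$. Letting $i$ run over $1,\dots,n-1$ yields $n-1$ distinct necessary generators, so $|A\cap D_{n-1}|\ge n-1$.

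The heart of the argument is the second layer, and here I would first isolate the structural fact that a product $\alpha\beta$ of two rank-$(n-1)$ elements of $\mathsf{L}_2$ that happens to have rank $n-2$ always has a \emph{convex} image: checking the four combinations of image types of $\alpha$ and $\beta$ shows $\im(\alpha\beta)$ is one of $\{1,\dots,n-2\}$, $\{2,\dots,n-1\}$, $\{3,\dots,n\}$. More generally, every rank-$(n-1)$ element and every convex rank-$(n-2)$ element increases by $0$ or $1$ at each step and therefore maps intervals to intervals, so no product assembled from such elements can ever acquire a non-convex image. Consequently a non-convex rank-$(n-2)$ element can be produced only with the help of a rank-$(n-2)$ factor. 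To count how many are forced, I define, for a non-convex rank-$(n-2)$ element $w$ with $\im(w)=X_n\setminus\{a,b\}$ and $a<b$, the quantity $\phi(w)=a$ if $b=n$ and $\phi(w)=b-1$ if $a=1$; in every non-convex case $\phi(w)\in\{2,\dots,n-2\}$, and all $n-3$ values are attained, e.g. $\phi(\xi_i)=i-1$ for $3\le i\le n-1$.

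The final step is to show that $\phi$ is inherited under factorisation. If $w=\alpha\beta$ has rank $n-2$ and its rightmost factor $\beta$ also has rank $n-2$, then $\im(w)=\im(\beta)$ and so $\phi(w)=\phi(\beta)$; if instead $\beta$ has rank $n-1$, a short case check on the collapsed pair shows that right multiplication by $\beta$ either destroys non-convexity or leaves $\phi$ unchanged (the gap may shift from a ``miss-$n$'' position $g$ to a ``miss-$1$'' position $g+1$, but $\phi=g$ is preserved). Taking $g_j$ to be the last rank-$(n-2)$ factor in an expression $w=g_1\cdots g_m$ for a non-convex $w\in D_{n-2}$ and applying these two observations to the suffix $g_{j+1}\cdots g_m$ gives $\phi(w)=\phi(g_j)$, with $g_j$ a non-convex rank-$(n-2)$ generator. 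Hence the $\phi$-values of the non-convex rank-$(n-2)$ generators in $A$ must cover all of $\{2,\dots,n-2\}$, forcing $|A\cap D_{n-2}|\ge n-3$, and adding the two layers finishes the proof. The hard part is precisely this invariance package: establishing the convex-image closure of rank-$(n-1)$ products and verifying, case by case on the collapsed pair, that right multiplication by a rank-$(n-1)$ element never shifts $\phi$. The image-inheritance observation is exactly what prevents two rank-$(n-2)$ factors from conspiring to open a gap at a new position, so it must be stated and used carefully.
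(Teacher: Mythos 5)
Your proposal is correct, and at the level of architecture it matches the paper: both proofs take the upper bound $2n-4$ from Lemma~\ref{l7} and obtain the lower bound by counting $n-1$ forced generators of rank $n-1$ plus $n-3$ forced generators of rank $n-2$, with the non-convex rank-$(n-2)$ elements grouped into exactly the same blocks (the paper's $U_i\cup V_i$ are precisely your fibres $\phi^{-1}(i)$, $2\le i\le n-2$). The execution of both counts, however, is genuinely different. For the top layer, the paper argues that $A\cap D_{n-1}$ must generate $\langle D_{n-1}\rangle=\mathcal{IO}_n$ and then quotes Theorem~\ref{t4} to get $\lvert A\cap D_{n-1}\rvert\ge \rank(\mathcal{IO}_n)=n-1$; your kernel argument (the leftmost factor of any factorisation of $\gamma_i$ collapses the same pair $\{i,i+1\}$, so $A$ meets each pair $\{\gamma_i,\beta_i\}$) reaches the same count in a self-contained way, without the cited minimality. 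For the second layer, the paper first reduces, via the relations $\alpha=\alpha\beta_{n-1}\gamma_1$ and $\delta=\delta\gamma_1\beta_{n-1}$, to the case $A\cap U=\emptyset$, then fixes one explicit witness in $V_i$ and shows that the suffix of generators following its last $V$-factor is forced to act as the identity on $\{2,\ldots,n\}$, so the witness inherits the image of that factor; you instead prove an invariance lemma --- $\phi$ is unchanged under right multiplication by a rank-$(n-1)$ element whenever the product stays non-convex of rank $n-2$ --- and peel trailing factors, which avoids both the without-loss-of-generality step and the choice of a special witness. I checked the case analysis you assert: for $\alpha\in U_a$ the only right factors from $D_{n-1}$ avoiding a rank drop or a convex image are $\gamma_{n-1}$ (image unchanged) and $\beta_{n-1}$ (image shifted into $V_a$), and dually for $\alpha\in V_i$ only $\beta_1$ and $\gamma_1$ work; so the invariance holds, and both arguments ultimately rest on the same closure fact that products of maps with convex image have convex image. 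Your route buys uniformity and robustness at the cost of a longer case check; the paper's is shorter because Theorem~\ref{t4} and the identity-suffix trick absorb most of the work. Two small loose ends: the case check is the real content of your invariance lemma and must actually be written out, and for $n=3$ you cannot invoke Lemma~\ref{l7} (stated for $n\ge 4$); there you need the paper's separate observation that $\mathsf{L}_2$ is generated by two explicit elements, after which your lower bound (which gives $n-1=2$ and $n-3=0$) still applies.
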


\begin{proof}
We previously observed that  $\rank(\mathsf{L}_{2})=2$ for $n=3$. Now, we consider the case  $n\geq 4$. Let $A$ be any generating set of $\mathsf{L}_{2}$, and let $D_{r}=D_{r}(\mathsf{L}_{2})$ for each $1\leq r\leq n-1$. Observe that $D_{n-1}= D_{n-1}(\mathcal{IO}_{n})$. By Theorem \ref{t4}, since $\{ \gamma_{1} ,\ldots, \gamma_{n-2}, \beta_{n-1} \} \subseteq D_{n-1}$ is a minimal generating set of $\mathcal{IO}_{n}$, and so of $D_{n-1}$, it follows that $\langle D_{n-1}\rangle =\mathcal{IO}_{n}\neq \mathsf{L}_{2}$, and that $A$ contains at least $n-1$ elements from $D_{n-1}$. As a consequence of this fact, we make the assumption that $A\cap D_{n-2}(\mathcal{IO}_{n}) =\emptyset$.  Since $\zeta_{n-1},\xi_{2}\in \mathcal{IO}_{n}$,  we consider the sets: $U= \bigcup\limits_{i=2}^{n-2} U_{i}$ and $V= \bigcup\limits_{i=2}^{n-2} V_{i}$ where 
\begin{eqnarray*}
	U_{i}&=& \{\alpha\in \mathsf{L}_{2}:\im(\alpha)=X_{n}\setminus \{i,n\}\}\,  \mbox{ and} \\ 
	V_{i}&=& \{\alpha\in L_{2}: \im(\alpha)=X_{n}\setminus \{1,i+1\}\}  
\end{eqnarray*}
for $2\leq i\leq n-2$. Then, it is routine matter to check that $D_{n-2}$ is a disjoint union of $U$, $V$ and $D_{n-2}(\mathcal{IO}_{n})$. Next we show that $(U_{i}\cup V_{i})\cap A\neq \emptyset$ for each $2\leq i\leq n-2$.  
	
Let $2\leq i\leq n-2$,\, $\alpha \in U_{i}$ and $\delta \in V_{i}$. Then it is clear that $\alpha \beta_{n-1} \in V_{i}$ and $\delta \gamma_{1}\in U_{i}$. Since $\alpha= \alpha \beta_{n-1} \gamma_{1}$ and $\delta= \delta \gamma_{1} \beta_{n-1}$, without loss of generality, we suppose that $A\cap U_{i}=\emptyset$ for all $2\leq i\leq n-2$. For each $2\leq i\leq n-2$, if we consider
\begin{eqnarray*}
&&\alpha =\left(\begin{matrix}
	1   & \cdots & i-2 & i-1 & i   & \cdots & n-3 & n-2 & n-1 & n \\
	2   & \cdots & i-1 & i & i+2 & \cdots & n-1 & n   & n   & n
\end{matrix}\right)\in V_{i},
\end{eqnarray*}
then there exist $\alpha_{1} ,\ldots, \alpha_{t}\in A$ such that $\alpha= \alpha_{1} \cdots \alpha_{t}$. Since $\rvert \im(\alpha) \lvert =n-2$, it follows that $n-2\leq \rvert \im(\alpha_{i}) \lvert \leq n-1$ for each $1\leq i\leq t$, and since we suppose that $A\cap U=A\cap D_{n-2}( \mathcal{IO}_{n}) =\emptyset$, we conclude that $\alpha_{1}, \ldots, \alpha_{t} \in D_{n-1} \cup V$. If $\alpha_{t} \in V$, then we immediately conclude that $\im(\alpha)= \im(\alpha_{t}) \in V_{i}$ since $\im(\alpha) \subseteq \im(\alpha_{t})$ and $\lvert \im(\alpha_{t}) \rvert =n-2$.    
	
Suppose $\alpha_{t} \in D_{n-1}$. Then $s=\max\{ \,i: \alpha_{i}\in V \}$ exists and $1\leq s\leq t-1$ since $\langle D_{n-1}\rangle =\mathcal{IO}_{n}\neq \mathsf{L}_{2}$. Let $\beta= \alpha_{1} \cdots \alpha_{s}$ and $\gamma= \alpha_{s+1} \cdots \alpha_{t}$ so that $\alpha =\beta \gamma$. Since $\gamma \in \mathcal{IO}_{n}$ and $2,n\in \im(\alpha) \subseteq \im(\gamma)$, it follows that $\im(\gamma) =\{2, \ldots, n\}$. Since $1\notin \im(\alpha_{s})$ and $\im(\beta) \subseteq \im(\alpha_{s})$, we notice that $2\leq 1\beta$, and so $2=1\alpha =(1\beta) \gamma \geq 2\gamma \geq 2$, i.e $1\gamma =2\gamma =2$. Thus, we have $\gamma_{ \mid_{\{2, \ldots, n\}}} =1_{\mid_{\{2, \ldots, n\}}}$, and so since $\im(\beta) \subseteq \{2, \ldots, n\}$ we conclude that $\alpha =\beta \gamma =\beta$, and so $\im(\alpha) = \im(\alpha_{s}) \in V_{i}$, as required. 
\end{proof}

Consider the semigroups $\mathsf{R}_{1}$ and $\mathsf{R}_{n}$, and consider the mapping $\varphi : \mathsf{R}_{1} \rightarrow \mathsf{R}_{n}$ which maps each transformation $\alpha\in \mathsf{R}_{1}$ into the transformation $\widehat{\alpha} \in \mathsf{R}_{n}$ defined by $x\widehat{\alpha} =n+1-(n+1-x)\alpha$ for all $x\in X_{n}$. It is a routine matter to check that $\varphi$ is an isomorphism. Consequently, $\mathsf{R}_{1}$ and $\mathsf{R}_{n}$ are isomorphic subsemigroups of $\mathcal{O}_{n}$. Thus, we only consider $\mathsf{R}_{1}=\{\alpha\in\mathcal{O}_{n}: 1\alpha = 2\alpha =1 \}$. It is also a routine matter to check that 
$\mathsf{R}_{1}= \left\{ \left( \begin{smallmatrix}
	1 & 2\\
	1 & 1
\end{smallmatrix}\right)\right\}$ for $n=2$ and
$\mathsf{R}_{1} =\left\{ \left( \begin{smallmatrix}
	1 & 2 & 3 \\
	1 & 1 & 1
\end{smallmatrix}\right),  \left( \begin{smallmatrix}
	1 & 2 & 3 \\
	1 & 1 & 2
\end{smallmatrix}\right), \left( \begin{smallmatrix}
	1 & 2 & 3 \\
	1 & 1 & 3
\end{smallmatrix}\right)\right\}$, whence the rank of $\mathsf{R}_{1}$ is $1$ for $n=2$, and is $2$ for $n=3$. Suppose $n\geq 4$ and let
\begin{eqnarray*}
&&\delta_{3}=\left(\begin{matrix}
	1  & 2 & 3 & 4 &\cdots & n \\
	1  & 1 & 4 & 4 &\cdots & n
\end{matrix}\right), \\
&&\delta_{i}=\left(\begin{matrix}
	1  & 2 & 3 & \cdots & i-1 &  i & i+1 & \cdots & n \\
	1  & 1 & 3 & \cdots & i-1 & i+1 & i+1 & \cdots & n
\end{matrix}\right) \,  \mbox{ for }\,  4\leq i\leq n-1, \mbox{ and}\\
&&\lambda_{i}=\left(\begin{array}{ccccccc}
	1  & 2 &\cdots &  i  & i+1 &\cdots & n \\
	1  & 1 &\cdots & i-1 & i+1 &\cdots & n
\end{array}\right) \, \mbox{ for }\,  2\leq i\leq n.
\end{eqnarray*} 
Moreover, we define the following two sets:
\begin{eqnarray*}
	C=\{ \delta_{i} : 3\leq i\leq n-1\},  &&\,\,\, F=\{ \lambda_{i} : 2\leq i\leq n\}, \\
	\mathsf{R}_{1}^{*}=\{\alpha\in \mathsf{R}_{1}:3\alpha \geq 3\}\quad &\mbox{ and }\,\, &D_{r}= D_{r}(\mathsf{R}_{1}) 
\end{eqnarray*}
for $2\leq r\leq n-1$. Then it is clear that $F=D_{n-1}(\mathsf{R}_{1})$. For each $1\leq i\leq n-1$, let $\theta_{i}$ be the idempotent of $\mathcal{O}_{n}$ such that $\fix(\theta_{i}) =X_{n} \setminus \{i\}$ and $i\theta_{i} =i+1$, and let 
\begin{eqnarray}\label{e1}
G_{n}=\{\theta_{1}, \ldots, \theta_{n-1}, \lambda_{n}, 1_{n}\}.
\end{eqnarray} 
It is shown in \cite[Theorem 2.7]{Gomes&Howie:1992} that $G_{n}$ is a minimal generating set of $\mathcal{O}_{n}$ for each $n\geq 2$. Now we have the following result.

\smallskip

\begin{proposition} \label{p9}
	$\mathcal{O}_{n-2}$ and $\mathsf{R}_{1}^{*}$ are isomorphic semigroups, and hence, $C\cup \{\lambda_{2}, \delta_{3} \lambda_{n}\}$ is a minimal generating set of $\mathsf{R}_{1}^{*}$ for $n\geq 4$.
\end{proposition}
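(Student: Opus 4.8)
The plan is to first make the isomorphism $\mathsf{R}_{1}^{*}\cong\mathcal{O}_{n-2}$ completely explicit, and then to obtain the asserted generating set by transporting the Gomes--Howie generating set $G_{n-2}$ of $\mathcal{O}_{n-2}$ across it. I would begin by recording the shape of an arbitrary $\alpha\in\mathsf{R}_{1}^{*}$. By definition $1\alpha=2\alpha=1$ and $3\alpha\geq 3$; since $\alpha$ is order-preserving this forces $x\alpha\geq 3$ for every $x\geq 3$, so that $2\notin\im(\alpha)$ and $\{3,\dots,n\}\alpha\subseteq\{3,\dots,n\}$. Hence $\alpha$ restricts to an order-preserving selfmap of the chain $\{3<\cdots<n\}$, which after the shift $x\mapsto x-2$ is precisely an element of $\mathcal{O}_{n-2}$. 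This motivates defining $\psi\colon\mathsf{R}_{1}^{*}\to\mathcal{O}_{n-2}$ by $\alpha\mapsto\overline{\alpha}$, where $x\overline{\alpha}=(x+2)\alpha-2$ for all $x\in X_{n-2}$.

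Next I would check that $\psi$ is an isomorphism. Well-definedness and order-preservation of $\overline{\alpha}$ are immediate from the previous paragraph. For bijectivity I would exhibit the inverse: given $\beta\in\mathcal{O}_{n-2}$, define $\alpha$ by $1\alpha=2\alpha=1$ and $(x+2)\alpha=x\beta+2$ for $1\leq x\leq n-2$; since $x\beta+2\geq 3$, this $\alpha$ lies in $\mathsf{R}_{1}^{*}$ and clearly satisfies $\overline{\alpha}=\beta$. The homomorphism property is the one computation requiring care. For $\alpha,\gamma\in\mathsf{R}_{1}^{*}$ (whose product again lies in $\mathsf{R}_{1}^{*}$, part of the same check) and $x\in X_{n-2}$,
\[
x\overline{\alpha\gamma}=(x+2)(\alpha\gamma)-2=\bigl((x+2)\alpha\bigr)\gamma-2 ,
\]
and the crucial point is that $(x+2)\alpha\geq 3$: writing $(x+2)\alpha=y+2$ with $y=x\overline{\alpha}$, the right-hand side equals $(y+2)\gamma-2=y\overline{\gamma}=x(\overline{\alpha}\,\overline{\gamma})$, so $\overline{\alpha\gamma}=\overline{\alpha}\,\overline{\gamma}$. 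This proves the first assertion.

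For the generating set I would invoke \cite[Theorem 2.7]{Gomes&Howie:1992} with $\mathcal{O}_{n-2}$ (valid since $n\geq 4$ gives $n-2\geq 2$): the set $G_{n-2}=\{\theta_{1},\dots,\theta_{n-3},\lambda_{n-2},1_{n-2}\}$, now interpreted as transformations of $X_{n-2}$, is a minimal generating set of $\mathcal{O}_{n-2}$, of size $n-1$. Because $\psi$ is an isomorphism, $\psi^{-1}(G_{n-2})$ is automatically a minimal generating set of $\mathsf{R}_{1}^{*}$ of the same size $n-1$, and it only remains to identify its elements by comparing transformation tables. A direct check gives $\psi^{-1}(1_{n-2})=\lambda_{2}$ (the identity of $X_{n-2}$ lifts to the map fixing $\{3,\dots,n\}$ and collapsing $\{1,2\}$ to $1$), $\psi^{-1}(\theta_{j})=\delta_{j+2}$ for $1\leq j\leq n-3$ (since $\theta_{j}$ sends $j\mapsto j+1$, its lift sends $j+2\mapsto j+3$ and fixes the rest of $\{3,\dots,n\}$, which is exactly $\delta_{j+2}$, matching the two-case definition of the $\delta_i$), so that $\psi^{-1}(\{\theta_{1},\dots,\theta_{n-3}\})=C$, and finally $\psi^{-1}(\lambda_{n-2})=\delta_{3}\lambda_{n}$. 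Therefore $\psi^{-1}(G_{n-2})=C\cup\{\lambda_{2},\delta_{3}\lambda_{n}\}$, completing the proof.

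The main obstacle is the last identification $\psi^{-1}(\lambda_{n-2})=\delta_{3}\lambda_{n}$. The one-step collapse $\lambda_{n-2}$ of $\mathcal{O}_{n-2}$ has no single counterpart among the distinguished elements of $\mathsf{R}_{1}^{*}$: the natural candidate $\lambda_{n}$ is \emph{not} in $\mathsf{R}_{1}^{*}$ (indeed $3\lambda_{n}=2<3$), so the generator must be realized as the composite $\delta_{3}\lambda_{n}$, and verifying that this composite has the required table is where the index bookkeeping matters; the same shift discipline underlies the homomorphism computation above. Everything else reduces to routine comparison of one-line notations, and the cardinality $n-1=\rank(\mathcal{O}_{n-2})$ is guaranteed by the isomorphism.
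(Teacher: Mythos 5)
Your proposal is correct and takes essentially the same route as the paper: the identical shift map $x\mapsto(x+2)\alpha-2$ giving the isomorphism $\mathsf{R}_{1}^{*}\cong\mathcal{O}_{n-2}$, followed by transporting the Gomes--Howie set $G_{n-2}$ back through the isomorphism, with the preimages identified as $C\cup\{\lambda_{2},\delta_{3}\lambda_{n}\}$. The only difference is that you spell out the verifications (homomorphism property, the inverse map, and the element-by-element identification, including why $\lambda_{n}$ itself cannot serve and the composite $\delta_{3}\lambda_{n}$ is needed) that the paper dismisses as routine.
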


\begin{proof}
First notice that 
$\delta_{3} \lambda_{n} =\left( \begin{matrix}
	1  & 2 & 3 & 4 & 5 &  \cdots & n \\
	1  & 1 & 3 & 3 & 4 &  \cdots & n-1
\end{matrix}\right) \in \mathsf{R}_{1}^{*}$, 
and so $C\cup \{\lambda_{2}, \delta_{3} \lambda_{n}\}$ is a subset of $\mathsf{R}_{1}^{*}$.
	
Let $\alpha$ be an element in $\mathsf{R}_{1}^{*}$. Then consider the map  $\check{\alpha} :X_{n-2} \rightarrow X_{n-2}$ defined by $x\check{\alpha} =(x+2)\alpha -2$ for all  $x\in  X_{n-2}$, and the map $\phi: \mathsf{R}_{1}^{*} \rightarrow \mathcal{O}_{n-2}$ defined by $\alpha \phi= \check{\alpha}$ for all $\alpha \in \mathsf{R}_{1}^{*}$. It is easy to check that $\check{\alpha} \in \mathcal{O}_{n-2}$, and that $\phi$ is an isomorphism. Moreover, since $(E\cup \{\lambda_{2}, \delta_{3} \lambda_{n}\}) \phi= G_{n-2}$, as defined in (\ref{e1}), is a minimal generating set of $\mathcal{O}_{n-2}$, it follows from the isomorphism that $C\cup \{\lambda_{2}, \delta_{3} \lambda_{n}\}$ a minimal generating set of $\mathsf{R}_{1}^{*}$.
\end{proof}

\smallskip

\begin{lemma}\label{l10}
	For $n\geq 4$, $C\cup F$ is a generating set of $\mathsf{R}_{1}$.
\end{lemma}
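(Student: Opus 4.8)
The plan is to use Proposition~\ref{p9} to swallow all of $\mathsf{R}_1^*$ into $\langle C\cup F\rangle$ at once, and then to reduce every remaining element to $\mathsf{R}_1^*$ (or to $F$) by a single uniform factorisation that strictly raises the value at the point $3$. First I would note that $\lambda_2,\lambda_n\in F$ and $\delta_3\in C$, so the generating set $C\cup\{\lambda_2,\delta_3\lambda_n\}$ of $\mathsf{R}_1^*$ provided by Proposition~\ref{p9} is contained in $\langle C\cup F\rangle$; hence $\mathsf{R}_1^*\subseteq\langle C\cup F\rangle$. Since $F=D_{n-1}(\mathsf{R}_1)$ is also at hand, every $\alpha\in\mathsf{R}_1$ with $3\alpha\geq 3$ lies in $\mathsf{R}_1^*$ and every $\alpha$ with $\lvert\im(\alpha)\rvert=n-1$ lies in $F$; so it remains only to treat $\alpha\in\mathsf{R}_1$ with $3\alpha\leq 2$ and $\lvert\im(\alpha)\rvert\leq n-2$.

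The engine is the following \emph{peeling} step. Given such an $\alpha$, since $1\in\im(\alpha)$ and $\lvert\im(\alpha)\rvert\leq n-2$, at least two points of $\{2,\dots,n\}$ are missing from $\im(\alpha)$, so I may fix some $i\in\{2,\dots,n-1\}\setminus\im(\alpha)$. I would then define $\beta\in\mathcal{O}_n$ by $1\beta=2\beta=1$ and, for $3\leq x\leq n$, by $x\beta=x\alpha+1$ if $x\alpha<i$ and $x\beta=x\alpha$ if $x\alpha>i$ (the case $x\alpha=i$ does not occur as $i\notin\im(\alpha)$). A direct check shows $\beta\in\mathsf{R}_1$, that the value map $v\mapsto v+[v<i]$ is order-preserving and injective so that $\lvert\im(\beta)\rvert\leq\lvert\im(\alpha)\rvert+1\leq n-1$, and—crucially—that $\alpha=\beta\lambda_i$ together with $3\beta=3\alpha+1$. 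The point of choosing $i\notin\im(\alpha)$ is that $\im(\beta\lambda_i)\subseteq\im(\lambda_i)=X_n\setminus\{i\}$ then loses no value actually attained by $\alpha$, which is exactly what lets $\lambda_i$ be "undone" on the right.

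With this in place I would finish by iteration: if $\lvert\im(\beta)\rvert=n-1$ then $\beta\in F$; otherwise $\beta$ again satisfies $\lvert\im(\beta)\rvert\leq n-2$, and since $3\beta=3\alpha+1$ the value at $3$ increases by one at each peel. Thus after at most two peels the value at $3$ reaches $3$, placing the resulting factor in $\mathsf{R}_1^*\subseteq\langle C\cup F\rangle$, or the rank climbs to $n-1$, placing it in $F$. In either case $\alpha=\beta\lambda_i$ (or $\alpha=\beta'\lambda_{i'}\lambda_i$) expresses $\alpha$ as a product of an element of $\mathsf{R}_1^*\cup F$ with generators from $F$, so $\alpha\in\langle C\cup F\rangle$, completing the proof.

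The step I expect to require the most care is the peeling construction itself: verifying that the partner $\beta$ genuinely lands in $\mathsf{R}_1$ with the advertised properties, namely that $\alpha=\beta\lambda_i$ holds at every point, that $3\beta=3\alpha+1$, and that the image grows by at most one so the iteration terminates. The conceptual obstacle underlying this is that no product of the image-shrinking maps $\lambda_i$ can ever \emph{raise} the value at position $3$ (all $\lambda_i$ are order-decreasing), which is precisely why $C$ is indispensable and why the factorisation must be arranged so that each factor is pushed either into $\mathsf{R}_1^*$—where the value-increasing generators $\delta_i$, with images $X_n\setminus\{2,i\}$, live—or into $F$.
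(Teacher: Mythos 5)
Your proposal is correct and is essentially the paper's own argument: after the same reduction via Proposition~\ref{p9}, your peeling factorisation $\alpha=\beta\lambda_i$, where $\beta$ is obtained by shifting every image value below the missing point $i$ up by one, coincides with the paper's $\beta$ in (\ref{e2}) and (\ref{e3}) once $i$ is taken to be the least value $j$ missing from $\im(\alpha)$. The only cosmetic difference is that the paper organises your at-most-two iterations as a case split on $A_1=\{1,2\}$ versus $3\in A_1$ (i.e.\ $3\alpha=2$ versus $3\alpha=1$), whereas you phrase it as a potential argument on the value $3\alpha$.
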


\begin{proof} 
By Proposition \ref{p9}, it is enough to show that $\mathsf{R}_{1} \setminus \mathsf{R}_{1}^{*} \subseteq \langle C\cup F \rangle$. For $\alpha \in \mathsf{R}_{1} \setminus \mathsf{R}_{1}^{*}$, if $\lvert \im(\alpha)\rvert=n-1$, then we immediately have $\alpha\in F$. So we suppose that $\alpha \in \mathsf{R}_{1} \setminus \mathsf{R}_{1}^{*}$ with size at most $n-2$. Then $\alpha$ has the tabular form: 
$\alpha=\left( \begin{matrix}
	A_{1} & A_{2} &\cdots & A_{r} \\
	1   & a_{2} &\cdots & a_{r}
\end{matrix}\right) \in D_{r}$ 
with $1=a_{1}< a_{2}<\cdots< a_{r}$ ($2\leq r\leq n-2$). Note that $2\in A_{1}$ and there exists $2\leq j\leq n$ such that $\{1,\ldots ,j-1\} \subseteq \im(\alpha)$ but $j\notin \im(\alpha)$, and so $a_{j}\geq j+1$. Then we consider two cases; either $3\notin A_{1}$, i.e. $A_{1}=\{1,2\}$ or $3\in A_{1}$.
	
\emph{Case} $1$. Suppose that $A_{1}=\{1,2\}$. Then note that $3\in A_{2}$, and so $2=a_{2}=3\alpha$ since $1< 3\alpha <3$. Thus, $j\geq 3$. If we let 
\begin{eqnarray}\label{e2}
\beta=\left( \begin{matrix}
	A_{1} & A_{2} &\cdots & A_{j-1} & A_{j} & A_{j+1} &\cdots & A_{r} \\
	1   &   3   &\cdots &    j    & a_{j} & a_{j+1} &\cdots & a_{r}
\end{matrix}\right) \in D_{r},
\end{eqnarray} 
then it is clear that $\beta\in \mathsf{R}_{1}^{*}$. Since $\alpha =\beta \lambda_{j}$, it follows from Proposition \ref{p9} that $\alpha \in \langle C\cup F \rangle$.
	
\emph{Case} $2$. Suppose that $3\in A_{1}$. Then we note that $B_{1}= A_{1}\setminus \{1,2\} \neq \emptyset$. If we similarly let 
\begin{eqnarray}\label{e3}
\beta=\left( \begin{matrix}
	\{1,2\}&B_{1}&A_{2}&\cdots& A_{j-1} & A_{j} & A_{j+1} &\cdots & A_{r} \\
	1   &  2  &  3  &\cdots&    j    & a_{j} & a_{j+1} &\cdots & a_{r}
\end{matrix}\right) \in D_{r+1},
\end{eqnarray}
then it is also clear that $\beta\in \mathsf{R}_{1} \setminus \mathsf{R}_{1}^{*}$ with $3\beta =2$. Since $\alpha =\beta \lambda_{j}$, it follows from the first case and Proposition \ref{p9} that $\alpha \in \langle C\cup F \rangle$.  Therefore, we conclude that $\langle C\cup F\rangle= \mathsf{R}_{1}$.
\end{proof}

For any subset $\pi$ of $X_{n}\times X_{n}$, let $\pi^{e}$ denote the smallest equivalence relation on $X_{n}$ containing $\pi$. Then we have one of the main theorems of this paper.

\smallskip

\begin{theorem}\label{t11}
	For  $n\geq 3$, $\rank(\mathsf{R}_{1})=\rank(\mathsf{R}_{n})=2n-4$.
\end{theorem}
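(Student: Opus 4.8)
The plan is to combine Lemma~\ref{l10} with a two-level lower-bound argument that runs exactly parallel to the proof of Theorem~\ref{t8}. Since $\mathsf{R}_{1}\cong \mathsf{R}_{n}$ it suffices to treat $\mathsf{R}_{1}$, and the case $n=3$ (rank $2=2n-4$) has already been settled, so I would assume $n\geq 4$. The upper bound is immediate from Lemma~\ref{l10}: the generating set $C\cup F$ consists of the $n-3$ maps in $C$, all of image size $n-2$, together with the $n-1$ maps in $F=D_{n-1}(\mathsf{R}_{1})$ of image size $n-1$; these two parts are disjoint, so $\rank(\mathsf{R}_{1})\leq |C\cup F|=2n-4$. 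Everything then reduces to proving $\rank(\mathsf{R}_{1})\geq 2n-4$, i.e. that every generating set $A$ satisfies $|A|\geq 2n-4$, and I would split this into forcing the whole top $\mathcal{D}$-class $F$ ($n-1$ elements) and then forcing $n-3$ further elements from $D_{n-2}(\mathsf{R}_{1})$.

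For the first part, note that $n-1$ is the maximal image size in $\mathsf{R}_{1}$ and that $F=\{\lambda_{2},\dots,\lambda_{n}\}$ consists of all maps whose (forced) non-trivial kernel class is $\{1,2\}$ and whose image is $X_{n}\setminus\{i\}$. Because $\lvert\im(\alpha_{1}\cdots\alpha_{t})\rvert\leq \min_{i}\lvert\im(\alpha_{i})\rvert$, any factorisation of $\lambda_{m}$ into elements of $A$ uses only factors of image size $n-1$, hence only elements of $F$. A direct computation gives $\lambda_{2}\lambda_{m}=\lambda_{m}$ and $\lvert\im(\lambda_{i}\lambda_{m})\rvert\leq n-2$ for $i\geq 3$; so, by induction on the length, every product of elements of $F$ of image size $n-1$ equals its last factor. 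Thus $\lambda_{m}$ can only be obtained as such a product whose last factor is $\lambda_{m}$ itself, which forces $\lambda_{m}\in A$. Hence $F\subseteq A$, contributing $n-1$ generators.

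For the second part I would exploit that each $\lambda_{i}$ is order-decreasing, so $\langle F\rangle$ consists of order-decreasing maps; in this argument order-decreasingness plays exactly the role that membership in $\mathcal{IO}_{n}$ played in Theorem~\ref{t8}. In particular the maps $\delta_{3},\dots,\delta_{n-1}$, being non-order-decreasing, lie outside $\langle F\rangle$. For each $3\leq c\leq n-1$ set $\mathcal{G}_{c}=\{\alpha\in\mathsf{R}_{1}:\ker(\alpha)=\{\{1,2\},\{c,c+1\}\}\}$; these are $n-3$ pairwise disjoint subsets of $D_{n-2}(\mathsf{R}_{1})$, each containing the non-order-decreasing map $\delta_{c}$, and they interact with one another under multiplication by elements of $F$ (the analogue of the families $U_{i},V_{i}$ related by $\beta_{n-1},\gamma_{1}$ in Theorem~\ref{t8}). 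I would prove that $\mathcal{G}_{c}\cap A\neq\emptyset$ for every $c$, which produces $n-3$ generators of image size $n-2$, disjoint from $F\subseteq A$; together with the first part this gives $|A|\geq (n-1)+(n-3)=2n-4$.

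To show $\mathcal{G}_{c}\cap A\neq\emptyset$, I would fix $\delta_{c}$ and write $\delta_{c}=\alpha_{1}\cdots\alpha_{t}$ with $\alpha_{i}\in A$: every factor has image size $n-2$ or $n-1$, and a factorisation using only $F$-factors would force $\delta_{c}\in\langle F\rangle$ to be order-decreasing, a contradiction, so some factor has image size $n-2$. Tracking the two points $c,c+1$ along the product — they are distinct at the start and identified at the end — singles out a factor carrying the collapse $c\sim c+1$, and a prefix/suffix analysis as in Theorem~\ref{t8} (replacing an $F$-prefix by the single $\lambda_{m}$ it equals, and showing the complementary $F$-part acts trivially on the relevant image) is intended to pin this factor down to $\mathcal{G}_{c}$. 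The hard part will be precisely this identification step: controlling how the $F$-factors shift kernels and images — the phenomenon already visible in small cases, where a map of $\mathcal{G}_{c}$ with one image is carried by right multiplication by a suitable $\lambda$ to another map of $\mathcal{G}_{c}$ with a different image — so that distinct values of $c$ are guaranteed to force \emph{distinct} generators. I expect this to require a careful choice of representative in $\mathcal{G}_{c}$ together with a $\beta_{n-1},\gamma_{1}$-type reduction allowing one to assume that $A$ avoids a `partner' family, exactly as in the final paragraph of the proof of Theorem~\ref{t8}.
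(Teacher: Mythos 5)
Your upper bound and the first half of your lower bound are correct and essentially match the paper. Lemma~\ref{l10} gives $\rank(\mathsf{R}_{1})\leq\lvert C\cup F\rvert=2n-4$; and your induction showing that every product of elements of $F$ of image size $n-1$ equals its last factor is a valid (if slightly longer) route to the paper's own observation that each $\lambda_{m}$ is undecomposable: if $\lambda_{m}=\beta\gamma$ in $\mathsf{R}_{1}$ then $\im(\lambda_{m})=\im(\gamma)$, and since an element of $\mathsf{R}_{1}$ with a given $(n-1)$-element image is unique, $\gamma=\lambda_{m}$. Your remark that $\langle F\rangle$ consists of order-decreasing maps, so that each $\delta_{c}$ needs a factor outside $F$, is also correct.

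The second half, however, is only a plan, and as set up it does not close. Tracking the pair $c,c+1$ through a factorization $\delta_{c}=\alpha_{1}\cdots\alpha_{t}$ only tells you that some factor $\alpha_{s}$ identifies the \emph{current} images $x=c\alpha_{1}\cdots\alpha_{s-1}$ and $y=(c+1)\alpha_{1}\cdots\alpha_{s-1}$; nothing in your argument prevents $\{x,y\}$ from being an adjacent pair $\{j,j+1\}$ with $j\neq c$, or a pair inside a kernel class $\{1,2,3\}$ (elements of $D_{n-2}(\mathsf{R}_{1})$ with kernel class $\{1,2,3\}$ exist and lie in no $\mathcal{G}_{c'}$), so $\alpha_{s}$ need not belong to $\mathcal{G}_{c}$. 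A Theorem~\ref{t8}-style prefix/suffix analysis cannot repair this, because that argument controls \emph{images}, while the obstruction here lives in \emph{kernels}. The missing ingredient --- and the heart of the paper's proof --- is the left-factor kernel containment: if $\delta_{c}=\beta\gamma$ with $\beta,\gamma\in\mathsf{R}_{1}$, then $\ker(\beta)\subseteq\ker(\delta_{c})=\{(1,2),(c,c+1)\}^{e}$, and since every element of $\mathsf{R}_{1}$ already collapses $(1,2)$, either $\ker(\beta)=\ker(\delta_{c})$, i.e.\ $\beta\in\mathcal{G}_{c}$, or $\ker(\beta)=\{(1,2)\}^{e}$, i.e.\ $\beta=\lambda_{j}\in F$. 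In the latter case $j\geq 3$ is impossible, since then $3\delta_{c}=3\lambda_{j}\gamma=2\gamma=1$ while $3\delta_{c}\geq 3$; hence $\beta=\lambda_{2}$, which is a left identity on $\mathsf{R}_{1}$, so $\gamma=\delta_{c}$ and one recurses on the length of the factorization. This shows every factorization of $\delta_{c}$ over a generating set $A$ contains an element with kernel exactly $\ker(\delta_{c})$; since these kernels are distinct for distinct $c$ and such elements lie outside $F$, one gets $\lvert A\rvert\geq(n-1)+(n-3)=2n-4$. Without this observation (or an equivalent one), your identification step --- which you yourself flag as the hard part --- remains open, so the proposal does not yet prove the lower bound.
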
	

\begin{proof}
First note that $D_{n-1}=F$ and that for each $2\leq j\leq n$ there exists a unique $\alpha \in D_{n-1}$ such that $\im(\alpha) =X_{n}\setminus \{ j\}$, namely $\alpha =\lambda_{j}$. For any $\alpha \in F$, assume that $\alpha = \beta\gamma$ for some $\beta, \gamma\in \mathsf{R}_{1}$. Since $\im(\alpha) \subseteq \im(\gamma)$ and $n-1=\lvert \im(\alpha) \rvert \leq\lvert \im(\gamma) \rvert\leq n-1$, it follows that $\im(\alpha) =\im(\gamma)$, and so $\alpha =\gamma$. Thus, each element of $F$ is undecomposable element in $\mathsf{R}_{1}$. 
	
Let $A$ be any generating set of $\mathsf{R}_{1}$, let $3\leq i\leq n-1$, and suppose that $\delta_{i}= \beta \gamma$ for some $\beta, \gamma\in \mathsf{R}_{1}$. Since $(1,2)\in \ker(\alpha)$ for all $\alpha \in \mathsf{R}_{1}$, and since $\ker(\beta) \subseteq \ker(\delta_{i}) =\{(1,2), (i,i+1) \}^{e}$, we have either $\ker(\beta) =\{(1,2) \}^{e}$ or $\ker(\beta) =\ker(\delta_{i})$. In the first case, since $\lvert \im(\beta)\rvert =n-1$, we have $\beta =\lambda_{j}$ for some $2\leq j\leq n$. For all $j\neq 2$, since $3\lambda_{j} \gamma= 2\gamma=1\neq 3\delta_{i}$, we must have $\beta =\lambda_{2}$, and so $\delta_{i}= \gamma$. Thus, in the both cases, we have an element $\beta\in A\setminus F$ such that $\ker(\beta) =\ker(\delta_{i})$, and so $\lvert A\lvert \geq 2n-4$. Therefore, by Lemma \ref{l10}, we have $\rank(\mathsf{R}_{1})=2n-4$, as claimed.  
\end{proof}

Finally, we focus on the semigroups $\mathsf{Z}_{1}$ and $\mathsf{Z}_{n}$. If we consider the isomorphism $\varphi: \mathsf{R}_{1} \rightarrow \mathsf{R}_{n}$, which is defined after the proof of Theorem \ref{t8}, then it is also a routine matter to check that the restriction of $\varphi$ to $\mathsf{Z}_{1}$ is an isomorphism from $\mathsf{Z}_{1}$ onto $\mathsf{Z}_{n}$. Thus, we only consider $\mathsf{Z}_{1} =\{\alpha\in \mathcal{O}_{n}: 1\alpha = 2\alpha =1\mbox{ and }  n\alpha \leq n-1\}$. It is clear that $\mathsf{Z}_{1}= \left\langle \left( \begin{smallmatrix}
1 & 2 & 3\\
1 & 1 & 2
\end{smallmatrix}\right)\right\rangle$ for $n=3$ and that  
$\left\{ \left( \begin{smallmatrix}
1 & 2 & 3  & 4 \\
1 & 1 & 2  & 3
\end{smallmatrix}\right),  \left( \begin{smallmatrix}
1 & 2 & 3 & 4\\
1 & 1 & 3 & 3
\end{smallmatrix}\right)\right\}$ is a minimal generating set of $\mathsf{Z}_{1}$ for $n=4$,  and so the rank of $\mathsf{Z}_{1}$ is $1$ for $n=3$, and is $2$ for $n=4$. Suppose $n\geq 5$ and let
\begin{eqnarray*}
&&\mu_{i}=\left( \begin{matrix}
	1  & 2 & 3 & \cdots  & i & i+1 & \cdots  & n \\
	1  & 1 & 3 & \cdots  & i & i   & \cdots  & n-1
	\end{matrix}\right) \, \mbox{ for }\, 3\leq i\leq n-1,\\
&&\rho_{3}=\left( \begin{matrix}
	1  & 2 & 3 & 4 & 5 & \cdots & n-1  & n \\
	1  & 1 & 4 & 4 & 5 & \cdots & n-1  & n-1
\end{matrix}\right), \\
&&\rho_{i}=\left( \begin{matrix}
		1  & 2 & 3 & \cdots & i-1 & i   & i+1 & \cdots & n-1  & n \\
		1  & 1 & 3 & \cdots & i-1 & i+1 & i+1 & \cdots & n-1  & n-1
	\end{matrix}\right) \, \mbox{ for }\, 4\leq i\leq n-2,  \, \mbox{ and}\\
&&\tau_{i}=\left(\begin{matrix}
		1  & 2 & 3 & \cdots & i   & i+1 & i+2 & \cdots & n-1  & n \\
		1  & 1 & 2 & \cdots & i-1 & i+1 & i+2 & \cdots & n-1  & n-1
	\end{matrix}\right) \, \mbox{ for }\, 3\leq i\leq n-1.
\end{eqnarray*}
Notice that $\tau_{n-1}=\gamma_{1}$. Moreover, we define the following sets:
\begin{eqnarray*}
	H=\{\mu_{i} : 3\leq i\leq n-1\}, && K=\{\rho_i: 3\leq i\leq n-2\},  \\
	M=\{\tau_{i}: 3\leq i\leq n-1\} &\mbox{and}&
	\mathsf{Z}_{1}^{*}=\{\alpha\in \mathsf{Z}_{1}: 3\alpha\geq 3\}. 
\end{eqnarray*}
First observe that $D_{n-1}(\mathsf{Z}_1)=\{\tau_{n-1}\}$ and $D_{n-2}(\mathsf{Z}_{1}^{*}) =H$. Since we now consider $\mathsf{L}_{1}$ and $E^{+}$ for different $n$'s, we use the notations $\mathsf{L}_{1,n}$ and $E_{n}^{+}$ for $\mathsf{L}_{1}$ and $E^{+}$, respectively. For every $\alpha \in \mathsf{Z}_{1}^{*}$, let $\check{\alpha} :X_{n-2} \rightarrow X_{n-2}$ defined as in the proof of Proposition \ref{p9}. If we define $\phi: \mathsf{Z}_{1}^{*} \rightarrow \mathsf{L}_{1,n-2}$ by $\alpha \phi=\check{\alpha}$ for all $\alpha \in \mathsf{Z}_{1}^{*}$, then it is also a routine matter to check that  $\phi$ is an isomorphism. By Corollary \ref{c6}, since $(H\cup K)\phi= E_{n-2}^{+} \cup D_{n-3}(L_{1,n-2})$ is a minimal generating set of $\mathsf{L}_{1,n-2}$, and so we have the following immediate corollary.

\smallskip

\begin{corollary} \label{c12}
	For $n\geq 5$,	$H\cup K$ is a minimal generating set of $\mathsf{Z}_{1}^{*}$. \hfill $\square$
\end{corollary}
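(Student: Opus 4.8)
The statement is, as the surrounding text announces, an immediate transport of structure along the isomorphism $\phi : \mathsf{Z}_{1}^{*} \to \mathsf{L}_{1,n-2}$ that has just been set up. The guiding principle of my plan is the elementary fact that a semigroup isomorphism carries generating sets to generating sets and, being a bijection, preserves cardinalities; hence it maps minimal generating sets to minimal generating sets and its inverse does the same. Since the excerpt has already displayed $(H\cup K)\phi = E_{n-2}^{+}\cup D_{n-3}(\mathsf{L}_{1,n-2})$ as a minimal generating set of $\mathsf{L}_{1,n-2}$, pulling this set back along $\phi$ should hand us a minimal generating set of $\mathsf{Z}_{1}^{*}$. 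It then only remains to justify each of the three ingredients.

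The plan proceeds in three steps. First, I would confirm that $\phi$ is a well-defined isomorphism. The map $x\check\alpha=(x+2)\alpha-2$ is order-preserving because $\alpha$ is; the defining condition $3\alpha\geq 3$ of $\mathsf{Z}_{1}^{*}$ gives $1\check\alpha=3\alpha-2\geq 1$, while $n\alpha\leq n-1$ (built into $\mathsf{Z}_{1}$) gives $(n-2)\check\alpha=n\alpha-2\leq n-3$, so every value of $\check\alpha$ lies in $X_{n-2}$ and the top point $n-2$ is missing from $\im(\check\alpha)$, i.e.\ $\check\alpha\in\mathsf{L}_{1,n-2}$. Bijectivity and the homomorphism property are checked exactly as for the shift map in Proposition~\ref{p9}. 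Second, I would verify the image identity $(H\cup K)\phi=E_{n-2}^{+}\cup D_{n-3}(\mathsf{L}_{1,n-2})$ by direct computation: one shows $\check\mu_{i}=\gamma_{i-2}$ for $3\leq i\leq n-1$, so that $H\phi=\{\gamma_{1},\dots,\gamma_{n-3}\}=D_{n-3}(\mathsf{L}_{1,n-2})$, and $\check\rho_{i}=\xi_{i-1}$ for $3\leq i\leq n-2$, so that $K\phi=\{\xi_{2},\dots,\xi_{n-3}\}=E_{n-2}^{+}$. Third, I would invoke the already-recalled result that $D_{m-1}(\mathsf{L}_{1,m})\cup E_{m}^{+}$ is a minimal generating set of $\mathsf{L}_{1,m}$, applied with $m=n-2\geq 3$ (legitimate precisely because $n\geq 5$).

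With these in hand the conclusion is immediate: $\langle H\cup K\rangle\phi=\langle(H\cup K)\phi\rangle=\mathsf{L}_{1,n-2}=\mathsf{Z}_{1}^{*}\phi$, so $H\cup K$ generates $\mathsf{Z}_{1}^{*}$; and since $\phi$ is a bijection, no proper subset of $H\cup K$ can generate, for otherwise its image would generate $\mathsf{L}_{1,n-2}$ using fewer elements, contradicting minimality there. There is essentially no conceptual obstacle in this corollary—its whole content is the transfer of a known result through $\phi$. The only mildly delicate point is the index bookkeeping in the second step, where the exceptional form of $\rho_{3}$ must be handled separately from the generic $\rho_{i}$ and the ``shift by two'' in $\phi$ has to be tracked carefully so that the images land exactly on the intended $\gamma$'s and $\xi$'s; everything else is a formal consequence of $\phi$ being an isomorphism.
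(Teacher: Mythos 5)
Your proposal is correct and takes essentially the same route as the paper: the paper's argument is exactly the transport of minimality through the isomorphism $\phi:\mathsf{Z}_{1}^{*}\to\mathsf{L}_{1,n-2}$, using that $(H\cup K)\phi=E_{n-2}^{+}\cup D_{n-3}(\mathsf{L}_{1,n-2})$ is a minimal generating set of $\mathsf{L}_{1,n-2}$. Your explicit index computations $\check{\mu}_{i}=\gamma_{i-2}$ and $\check{\rho}_{i}=\xi_{i-1}$ (with $\rho_{3}$ handled separately) simply spell out what the paper leaves as a routine check.
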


Then we have the following.

\smallskip

\begin{lemma} \label{l13}
	For $n\geq 5$, $H\cup K\cup M$ is a generating set of $\mathsf{Z}_{1}$. 
\end{lemma}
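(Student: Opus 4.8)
The claim is that $H\cup K\cup M$ generates $\mathsf{Z}_1$ for $n\geq 5$. By Corollary~\ref{c12}, $H\cup K$ already generates the subsemigroup $\mathsf{Z}_1^*=\{\alpha\in\mathsf{Z}_1:3\alpha\geq 3\}$. So the only thing that needs to be shown is that every element of $\mathsf{Z}_1\setminus\mathsf{Z}_1^*$ lies in $\langle H\cup K\cup M\rangle$. These are exactly the transformations $\alpha\in\mathsf{Z}_1$ with $3\alpha=2$ (since $1\alpha=2\alpha=1$ forces $3\alpha\in\{1,2\}$, and $3\alpha=1$ would conflict with $3\alpha\geq 3$ being false only when $3\alpha<3$, so the non-star case means $3\alpha=2$). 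The role of the set $M=\{\tau_i\}$ is presumably to ``push down'' such a transformation into $\mathsf{Z}_1^*$ after one multiplication, mirroring exactly how $F=\{\lambda_i\}$ was used in Lemma~\ref{l10} and how $\lambda_n$ appeared there.

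**The plan.** First I would fix an arbitrary $\alpha\in\mathsf{Z}_1\setminus\mathsf{Z}_1^*$ and write it in tabular form $\alpha=\bigl(\begin{smallmatrix}A_1&\cdots&A_r\\1&a_2&\cdots&a_r\end{smallmatrix}\bigr)$ with $1=a_1<a_2<\cdots<a_r\leq n-1$ (the bound $a_r\leq n-1$ is the defining constraint $n\alpha\leq n-1$ of $\mathsf{Z}_1$). Since $\alpha\notin\mathsf{Z}_1^*$ we have $3\alpha=2$, hence $a_2=2$ and $\{1,2,3\}$ are distributed so that $3\in A_2$ (matching Case~1 in Lemma~\ref{l10}). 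The strategy is to produce a factorization $\alpha=\beta\gamma$ where $\gamma\in M$ and $\beta\in\mathsf{Z}_1^*$, so that $\beta\in\langle H\cup K\rangle$ by Corollary~\ref{c12} and $\alpha\in\langle H\cup K\cup M\rangle$. I would construct $\beta$ by the same device as in (\ref{e2}): replace the image values on the initial blocks so that the output becomes strictly increasing past the value $2$, pushing $3\alpha$ up to $3$ and relabeling subsequent images upward so that $\beta$ satisfies $3\beta\geq 3$, and then compose with the appropriate $\tau_i$ to restore the original image.

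**Locating the right $\tau_i$.** The key computational step is identifying which $\tau_i$ to use. Let $j$ be determined by the image of $\alpha$: there is a least value omitted, or more precisely the place where the ``gap'' created by $3\alpha=2$ must be reinserted after lifting. The map $\tau_i$ has the effect of collapsing $i\mapsto i-1$ at the bottom (sending $3\mapsto 2$, keeping $1,2$ fixed below via $1,2\mapsto 1$) while acting as a near-identity above, so right-multiplication by a suitable $\tau_i$ converts a ``$3\beta=3$'' transformation back into one with $3\alpha=2$ and the correct overall image. I would verify by direct block-chase that with the correct choice of $i$ (read off from where $\alpha$ places its second image value $2$ relative to the rest), the identity $\alpha=\beta\tau_i$ holds. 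One should also handle separately the boundary subcase where $\lvert\im(\alpha)\rvert=n-1$, i.e.\ $\alpha\in D_{n-1}(\mathsf{Z}_1)=\{\tau_{n-1}\}=\{\gamma_1\}$, which is trivially in $M$.

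**Main obstacle.** The routine parts are the tabular bookkeeping. The genuine difficulty is pinning down the exact index $i$ of the $\tau_i$ factor as a function of the block structure of $\alpha$, and confirming that the lifted map $\beta$ genuinely satisfies $3\beta\geq 3$ (so that Corollary~\ref{c12} applies) while simultaneously $\beta\tau_i=\alpha$ recovers the original on every block. Because $\tau_i$ acts as the identity on the upper part $\{i+1,\dots,n-1\}$ and contracts only near the bottom, the factorization must be arranged so that all the nontrivial action of $\alpha$ above its $2$-value is carried entirely by $\beta$; the cleanest route, paralleling Lemma~\ref{l10}'s two-case split, is to reduce to the subcase $A_1=\{1,2\}$ first (where $\beta$ is immediate from (\ref{e2})) and then reduce a general $\alpha$ with $3\in A_1$ to that subcase by an auxiliary lift as in (\ref{e3}). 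I expect the verification that these two reductions compose correctly to be the one place demanding care.
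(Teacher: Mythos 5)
Your proposal is essentially the paper's own proof: reduce via Corollary~\ref{c12} to showing $\mathsf{Z}_{1}\setminus \mathsf{Z}_{1}^{*} \subseteq \langle H\cup K\cup M\rangle$, dispose of $\lvert\im(\alpha)\rvert=n-1$ (which gives $\tau_{n-1}\in M$), and otherwise lift $\alpha$ to some $\beta\in \mathsf{Z}_{1}^{*}$ by the constructions of (\ref{e2}) and (\ref{e3}) and factor $\alpha=\beta\tau_{j}$, where $j$ is the least value missing from $\im(\alpha)$, splitting into the cases $A_{1}=\{1,2\}$ and $3\in A_{1}$ exactly as in Lemma~\ref{l10}. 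The one correction needed is your opening claim that $\mathsf{Z}_{1}\setminus \mathsf{Z}_{1}^{*}$ consists exactly of the $\alpha$ with $3\alpha=2$: it also contains those with $3\alpha=1$ (precisely the case $3\in A_{1}$), but since your own two-case split handles $3\in A_{1}$ separately, this slip is confined to that parenthetical remark and does not damage the argument.
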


\begin{proof}
By Corollary \ref{c12}, it is enough to show that $\mathsf{Z}_{1}\setminus \mathsf{Z}_{1}^{*}  \subseteq \langle H\cup K\cup M\rangle$. For $\alpha \in \mathsf{Z}_{1} \setminus \mathsf{Z}_{1}^{*}$, if $\lvert \im(\alpha) \rvert =n-1$, then we immediately have $\alpha= \tau_{n-1}\in M$. So we suppose that $\alpha \in \mathsf{Z}_{1} \setminus \mathsf{Z}_{1}^{*}$ with size at most $n-2$. Then $\alpha$ has the tabular form: 
$\alpha=\left( \begin{matrix}
	A_{1} & A_{2} &\cdots & A_{r} \\
	1   & a_{2} &\cdots & a_{r}
\end{matrix}\right) \in D_{r}$ 
with $1=a_{1}< a_{2}<\cdots< a_{r}$ ($2\leq r\leq n-2$). Note that $2\in A_{1}$ and there exists $2\leq j\leq n-1$ such that $\{1,\ldots ,j-1\} \subseteq \im(\alpha)$ but $j\notin \im(\alpha)$, and so $a_{j}\geq j+1$. Then we consider two cases as above; either $3\notin A_{1}$ or $3\in A_{1}$.
	
\emph{Case} $1$. Suppose that $A_{1}=\{1,2\}$. Then note that $3\in A_{2}$, and similarly, $a_{2}=2$ and $j\geq 3$. Similarly, we consider $\beta$ as defined in (\ref{e2}). Since $\beta\in \mathsf{Z}_{1}^{*}$ and $\alpha =\beta \tau_{j}$, it follows from Corollary \ref{c12} that $\alpha \in \langle H\cup K\cup M \rangle$.
	
\emph{Case} $2$. Suppose that $3\in A_{1}$. Similarly, we consider $\beta$ as defined in (\ref{e3}). Since $\beta\in \mathsf{Z}_{1} \setminus \mathsf{Z}_{1}^{*}$ with $3\beta =2$, and since $\alpha =\beta \tau_{j}$, it follows from the first case and Corollary \ref{c12} that $\alpha \in \langle H\cup K\cup M \rangle$. Therefore, we have $\langle H\cup K\cup M \rangle=\mathsf{Z}_{1}$.
\end{proof}

\smallskip

\begin{theorem}\label{t14}
	For $n\geq 5$, $\rank(\mathsf{Z}_{1})=\rank(\mathsf{Z}_{n})=2n-5$.	
\end{theorem}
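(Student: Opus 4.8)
The plan is to reduce to $\mathsf{Z}_{1}$ via the restriction of the isomorphism $\varphi$ already introduced, to dispose of $n=3,4$ as done, and to analyze $\mathsf{Z}_{1}$ for $n\geq 5$ through its decomposition into the subsemigroup $\mathsf{Z}_{1}^{*}$ and the complementary set $I=\mathsf{Z}_{1}\setminus \mathsf{Z}_{1}^{*}=\{\alpha\in\mathsf{Z}_{1}:3\alpha\leq 2\}$. The structural fact driving everything is that $I$ is a right ideal and, more precisely, that if a prefix $\alpha_{1}\cdots\alpha_{j}$ of a product lies in $I$ then so does the whole product (once the image of $3$ has dropped into $\{1,2\}$, every further right factor, sending $1$ and $2$ to $1$, keeps it there). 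Hence a product equals an element of $\mathsf{Z}_{1}^{*}$ only if every prefix lies in $\mathsf{Z}_{1}^{*}$; nevertheless individual interior factors may still lie in $I$ (e.g.\ $\tau_{n-1}$), so $A\cap\mathsf{Z}_{1}^{*}$ need \emph{not} generate $\mathsf{Z}_{1}^{*}$. This is exactly what makes the naive bound $\rank(\mathsf{Z}_{1}^{*})+|M|$ too large and forces a finer count.

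For the lower bound I would exhibit $2n-5$ forced, pairwise distinct generators in three batches. First, an image argument in $I$: $\tau_{n-1}$ is the unique element of image size $n-1$, hence undecomposable, and for $3\le i\le n-2$, writing $\tau_{i}=\beta\gamma$ one checks (using $\max\im\tau_{i}=n-1$ to exclude $\gamma=\tau_{n-1}$, and $3\tau_{i}=2$ to force $3\gamma\le 2$) that the final factor lies in $I$ with image exactly $\im\tau_{i}$; these images are distinct, so $|A\cap I|\ge n-3$. Second, a kernel argument in $\mathsf{Z}_{1}^{*}$: for $3\le j\le n-1$, writing $\mu_{j}=\alpha_{1}\cdots\alpha_{t}$ (all prefixes in $\mathsf{Z}_{1}^{*}$), the first factor cannot have kernel $\{(1,2)\}^{e}$ (that would force $\alpha_{1}=\tau_{n-1}$, whence $3\mu_{j}=1$), so $\ker\alpha_{1}=\{(1,2),(j,j+1)\}^{e}$; these $n-3$ kernels are distinct and each such generator has image size $n-2$. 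The key rigidity is that every element of $\mathsf{Z}_{1}^{*}$ of image size $n-2$ sends $3$ to $3$: its image values form a strictly increasing sequence $1=v_{1}<\cdots<v_{n-2}\le n-1$ omitting one value, and $v_{2}\ge 3$ (membership in $\mathsf{Z}_{1}^{*}$) together with $v_{2}\le 3$ gives $v_{2}=3$. Third, a climber argument: to produce $\rho_{3}$ (with $3\rho_{3}=4$) inside $\mathsf{Z}_{1}^{*}$, the value at $3$ must climb from $3$ to $4$, so some generator $c$ satisfies $3c\ge 4$; then $c\in\mathsf{Z}_{1}^{*}$ and, by the rigidity, $c$ cannot have image size $n-2$, so it is distinct from the $n-3$ kernel generators (and from the $I$-batch). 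Altogether $|A|\ge (n-3)+(n-3)+1=2n-5$.

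For the upper bound I would take $B=H\cup M\cup\{\omega\}$, where $\omega\in\mathsf{Z}_{1}^{*}$ is the climber sending $k\mapsto k+1$ for $3\le k\le n-2$ and $n-1,n\mapsto n-1$, so that $|B|=(n-3)+(n-3)+1=2n-5$. By Lemma \ref{l13} it suffices to show $K=\{\rho_{3},\dots,\rho_{n-2}\}\subseteq\langle B\rangle$, since $H\cup K\cup M$ already generates $\mathsf{Z}_{1}$. Here $\omega$ supplies the single upward jump (it is the only generator that raises a value, and it reaches up to $n-1$), while the $\mu_{i}$ reposition and the $\tau_{i}$ trim; for instance one has factorizations such as $\rho_{3}=\mu_{n-1}\mu_{3}\,\omega$ and $\rho_{n-2}=\omega\,\tau_{n-2}$, the general pattern being $\rho_{i}=(\text{product of }\mu\text{'s})\,\omega\,(\text{product of }\tau\text{'s})$. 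Verifying these identities for every $i$ is routine but is the computational heart of this direction.

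The main obstacle is the lower bound, and within it the disentangling of $\mathsf{Z}_{1}^{*}$ from the one-sided ideal $I$: because $\mathsf{Z}_{1}^{*}$-elements may legitimately be built using $I$-factors such as $\tau_{n-1}$, one cannot add $\rank(\mathsf{Z}_{1}^{*})=2n-7$ to a count of ideal generators. The argument instead isolates three mutually exclusive features — image class inside $I$, kernel class on the image-size-$(n-2)$ layer of $\mathsf{Z}_{1}^{*}$, and the single climbing feature $3c\ge 4$ that the rigidity lemma pushes to image size $\le n-3$ — and everything hinges on showing these batches cannot share a generator. I expect the delicate step to be the climber's distinctness (the rigidity forcing $3b=3$ for every kernel generator), with the explicit factorizations of the $\rho_{i}$ being the most laborious part.
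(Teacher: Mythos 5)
Your proposal is correct, and although its skeleton matches the paper's (a three-batch count of forced generators giving $(n-3)+(n-3)+1$ for the lower bound, and $H\cup M\cup\{\omega\}$ combined with Lemma~\ref{l13} for the upper bound), the mechanisms differ in two places. For the upper bound your $\omega$ is literally the paper's element $\rho$, and the pattern you conjectured but deferred as ``routine'' is realized uniformly by the paper's single identity $\mu_{i}\rho\tau_{i}=\rho_{i}$ for all $3\leq i\leq n-2$, which is cleaner than case-by-case factorizations such as your (correct) $\rho_{3}=\mu_{n-1}\mu_{3}\omega$ and $\rho_{n-2}=\omega\tau_{n-2}$; citing that one identity closes the only loose end in your write-up. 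For the lower bound the differences are genuine: the paper shows each $\tau_{i}$ is outright undecomposable (via injectivity on $\{3,\dots,n-1\}$), whereas you establish the weaker but sufficient claim that the last factor of any factorization of $\tau_{i}$ lies in $I$ with image exactly $\im(\tau_{i})$; and, most notably, the paper's ``$+1$'' step is an order-theoretic closure argument --- all $2n-6$ forced generators lie in $D_{n-2}(\mathsf{Z}_{1})\cup\{\tau_{n-1}\}$, whose elements are order-decreasing, while $\mathsf{Z}_{1}$ contains elements that are not --- whereas yours rests on the right-ideal property of $I$, the prefix analysis it induces, the rigidity lemma that every element of $\mathsf{Z}_{1}^{*}$ of image size $n-2$ fixes $3$, and the climbing argument producing a generator $c$ with $3c\geq 4$. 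Both ``$+1$'' arguments are valid. The paper's is shorter; yours costs the extra rigidity lemma but buys structural clarity: it makes the pairwise disjointness of the three batches transparent ($I$ versus $\mathsf{Z}_{1}^{*}$, then image size), it correctly explains why one cannot simply add $\rank(\mathsf{Z}_{1}^{*})=2n-7$ to $\lvert M\rvert$ (later factors of a product in $\mathsf{Z}_{1}^{*}$ may come from $I$), and it locates the extra generator in image rank at most $n-3$, which is consistent with the paper's closing remark exhibiting $\rho\in D_{n-3}(\mathsf{Z}_{1})$ as a decomposable element.
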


\begin{proof}
Let $A$ be any  generating set of $\mathsf{Z}_{1}$. Since $D_{n-1}(\mathsf{Z}_{1})=\{\tau_{n-1}\}$, $\tau_{n-1}$ is clearly undecomposable. Let $3\leq i\leq n-2$ and assume that $\tau_{i} =\alpha \beta$ for some $\alpha ,\beta\in \mathsf{Z}_{1}$. Since $\tau_{i}$ is injective on $\{3,\dots,n-1\}$,  $\alpha$ is also injective on $\{3,\dots,n-1\}$. Moreover, since $1\leq 3\alpha \leq 2$ gives the contradiction $2=3\tau_{i}=3\alpha \beta =1$, it follows that $\alpha$ is a bijection from $\{3,\dots,n-1\}$ onto itself, and so $x\beta=x$ for every $3\leq x\leq n-1$. Thus, $\beta=\tau_{i}$, i.e. each element of $M$ is undecomposable element in $\mathsf{Z}_{1}$. Thus, $A$ contains $M$. 

Let $3\leq i\leq n-1$ and suppose that $\mu_{i}= \beta \gamma$ for some $\beta, \gamma\in \mathsf{Z}_{1}$. Similarly, either $\ker(\beta) =\{(1,2) \}^{e}$ or $\ker(\beta) =\ker(\mu_{i})$. In the first case, since $\lvert \im(\beta) \rvert =n-1$, we have $\beta =\tau_{n-1}$, and moreover, since $3\tau_{n-1}=2$, we obtain $3\mu_{i}=2$ which is a contradiction. Thus, we must have $\ker(\beta) =\ker(\mu_{i})$. For all $3\leq i\leq n-2$ and $3\leq j\leq n-1$, notice that $\ker(\mu_{i}) \neq\ker(\tau_{j})$,  but for all $3\leq j\leq n-2$, $\ker(\mu_{n-1}) =\ker(\tau_{j})$. Since $3\tau_{j} \gamma =1$ for all $3\leq j\leq n-1$ and $\gamma\in \mathsf{Z}_{1}$, we also notice that $\beta \neq \tau_{j}$ for all $3\leq j\leq n-1$,  and so $A$ must contain at least $n-3$ elements from $D_{n-2}(\mathsf{Z}_{1})$. Since every element $\alpha$ in $D_{n-2}(\mathsf{Z}_{1}) \cup \{\tau_{n-1}\}$ is order-decreasing but $\mathsf{Z}_{1}$ contains order-increasing elements, we conclude that $\langle D_{n-2}(\mathsf{Z}_{1})\cup \{\tau_{n-1}\} \rangle \neq \mathsf{Z}_{1}$, and so $\lvert A\rvert >(n-3)+(n-3)=2n-6$. Finally, consider 
$$\rho=\left(\begin{matrix}
	1  & 2 & 3 & 4 &\cdots & n-2 & n-1  & n \\
	1  & 1 & 4 & 5 &\cdots & n-1 & n-1 & n-1 
\end{matrix}\right)\in D_{n-3}(Z_{1}),$$ 
and observe that $\mu_{i}\rho\tau_{i}=\rho_{i}$ for each $3\leq i\leq n-2$. Therefore, it follows from Lemma \ref{l13} that $H\cup M\cup\{\rho\}$ is a minimal generating set of $\mathsf{Z}_{1}$, and so we have $\rank(\mathsf{Z}_{1})=\rank(\mathsf{Z}_{n})=2n-5$.
\end{proof}

Notice that if we consider
$$\alpha =\left(\begin{matrix}
	1  & 2 & 3 & 4 &\cdots & n-2 & n-1  & n \\
	1  & 1 & 3 & 5 &\cdots & n-1 & n-1 & n-1 
\end{matrix}\right)\, \mbox{ and }\, 
\beta =\left(\begin{matrix}
	1  & 2 & 3 & 4 & 5 &\cdots & n-1  & n \\
	1  & 1 & 4 & 4 & 5 &\cdots & n-1 & n-1 
\end{matrix}\right),$$ 
then it is clear that $\alpha, \beta\in D_{n-3}(Z_{1})$, and that $\rho= \alpha \beta$, and so the mapping $\rho$ defined in the above proof is not undecomposable in $Z_{1}$.


\section*{ORCID}

\noindent Emrah Korkmaz - \url{https://orcid.org/0000-0002-4085-0419}

\noindent Hayrullah Ayık- \url{https://orcid.org/0000-0001-7569-5597}

\end{document}